\newtheorem{lemma}{Lemma}[section]
\newtheorem{proposition}[lemma]{Proposition}
\newtheorem{theorem}[lemma]{Theorem}
\newtheorem{remark}[lemma]{Remark}
\newtheorem{hypothesis}[lemma]{Hypothesis}
\newcommand{\geps}{\gamma_{\varepsilon}}
\newcommand{\gsig}{\gamma_{\sigma}}
\newcommand{\gj}{\gamma_J}
\newcommand{\R}{\mathbb{R}}
\newcommand{\curl}{\mathrm{curl}}
\renewcommand{\Re}{\mathrm {Re}\,}
\renewcommand{\Im}{\mathrm {Im}\,}
\newcommand{\supp}{\rm{supp}\,}
\newcommand{\cl}{\nabla\times}
\newcommand{\Ltc}{(L^2(\Omega))^3}
\newcommand{\Ltcb}{(L^2(\partial\Omega))^3}
\newcommand{\Hc}{H(\curl,\Omega) }
\newcommand{\Hmhdiv}{H^{-1/2}(\text{div},\partial\Omega)}
\newcommand{\Hmhcl}{H^{-1/2}(\text{curl},\partial\Omega)}
\newcommand{\Hmh}{H^{-1/2}(\partial\Omega)}
\newcommand{\vA}{\mathbf{A}}
\newcommand{\vB}{\mathbf{B}}
\newcommand{\vC}{\mathbf{C}}
\newcommand{\vE}{\mathbf{E}}
\newcommand{\vH}{\mathbf{H}}
\newcommand{\vF}{\mathbf{F}}
\newcommand{\vG}{\mathbf{G}}
\newcommand{\vJ}{\mathbf{J}}
\newcommand{\vQ}{\mathbf{Q}}
\newcommand{\vx}{\mathbf{x}}
\newcommand{\vn}{\hat{\mathbf{n}}}
\newcommand{\vh}{\mathbf{h}}
\newcommand{\vg}{\mathbf{g}}
\newcommand{\vf}{\mathbf{f}}
\newcommand{\vk}{\mathbf{k}}
\newcommand{\vu}{\mathbf{u}}
\newcommand{\vv}{\mathbf{v}}
\newcommand{\vphi}{\boldsymbol{\phi}}
\newcommand{\vdg}{\mathbf{\delta}\mathbf{g}}
\newcommand{\eps}{\varepsilon}
\newcommand{\bn}{\mathbf n}
\newcommand{\bE}{\mathbf E} 
 \newcommand{\bJ}{\mathbf J}
\newcommand{\bQ}{\mathbf Q}
\title[Acousto-electromagnetic Inverse Source Problem]{Inverse Source Problem for Acoustically- Modulated Electromagnetic Waves}
\author{%
}
\author{Wei Li, John C. Schotland, Yang Yang, Yimin Zhong}
\address{Department of Mathematical Sciences, DePaul University, Chicago, IL 60604}
\email{wei.li@depaul.edu}
\address{Department of Mathematics and Department of Physics, Yale University, New Haven, CT 06511}
\email{john.schotland@yale.edu}
\address{Department of Computational Mathematics, Science and Engineering, Michigan State University, East Lansing, MI 48824}
\email{yangy5@msu.edu}
\address{Department of Mathematics, Duke University, Durham, NC 27708}
\email{yimin.zhong@duke.edu}
\begin{document}

\maketitle

\begin{abstract}
We propose a method to reconstruct the electrical current density from acoustically-modulated boundary measurements of time-harmonic electromagnetic fields. We show that the current can be uniquely reconstructed with Lipschitz stability. We also report numerical simulations to illustrate the analytical results.
\end{abstract} 	

\section{Introduction}
The inverse source problem for the Maxwell equations is of fundamental interest and considerable practical importance, with applications ranging from geophysics to biomedical imaging~\cite{Isakov_book,Albanese_2006,Bao_2020,Bleistein_1977,zhao2018inverse}. The problem is usually stated in the following form: determine the electric current density from boundary measurements of the electric and magnetic fields. It is well known that this problem is underdetermined and does not admit a unique solution, due to the existence of so-called nonradiating sources~\cite{Devaney_1973}.
However, if the source is spatially localized or some other a priori information is available, it is often possible to characterize the source to some extent. Such a method is applied to the localization of low-frequency electric and magnetic signals originating from current sources in the brain or heart~\cite{Plonsey_Barr}.

In this paper, we propose an alternative approach to the electromagnetic inverse source problem. In this approach, which is an extension of the authors' previous work on the acousto-electric inverse source problem for static fields~\cite{Li_2021}, a wavefield is used to control the material properties of a medium of interest, which is then probed by a second wavefield. Also see related work on hybrid imaging~\cite{Ammari_2008,Bal_2016,Bal2010,BalGuoMonard2014,Bal_2010,Bal_2014,Capdeboscq_2009,Chung_2020_1,Chung_2020_2,Chung_2017,Chung2021,Gebauer_2008,Kuchment_2011,kuchment2012stabilizing,li2019hybrid,li2020inverse,Li_2021,Nachman,Schotland_2020,Triki_2010}. Here the electric current density as well as the conductivity, electric permittivity and magnetic permeability are spatially modulated by an acoustic wave. In this manner, we find that it is possible to uniquely recover the current density from boundary measurements of the fields with Lipschitz stability.

The remainder of this paper is organized as follows. In Section 2 we introduce a model for the acoustic modulation of the current density and the material parameters. 
In Sections 3 and 4 this model is used to formulate the inverse source problem and thereby derive an internal functional from which the source may be recovered.
Numerically simulated reconstructions are given in Section 5. Finally, our conclusions are presented in Section 6.

\section{Model}
We begin by developing a simple model for acoustic modulation of the electrical current density and material parameters, following the approach of~\cite{Li_2021}.
We begin by considering
the time-harmonic Maxwell equations in a bounded domain $\Omega\subset\R^3$:
\begin{equation}
\label{eq:frequency}
\begin{aligned}
i\omega \varepsilon  \,  \vE +  \cl \vH &= \vJ + \sigma \vE  & \quad\quad  \text{in } \quad \Omega,\\
-i\omega \mu \vH +  \cl \vE &= 0 & \quad\quad \text{in } \quad \Omega \ .
\end{aligned}
\end{equation}
We also impose the impedance boundary condition
\begin{equation}\label{eq:imp}
\vH \times \vn   - \lambda (\vn\times\vE)\times\vn=0  \quad\quad \text{on } \quad \partial\Omega \ ,
\end{equation}
which arises since $\Omega$ is taken to be enclosed by a good conductor.
Here the vector functions $\vJ$, $\vE$ and $\vH$ are the current density, the electric field, and the magnetic field, respectively. The scalar functions $\varepsilon$, $\mu$, $\sigma$ and $\lambda$ are the  {electric permittivity}, {magnetic permeability}, {conductivity}, and {surface impedance}, respectively. The vector $\vn$ is the outward unit outward normal to $\Omega$ and $\omega$ is a fixed frequency. Note that in the above, we do not write the equations governing the divergence of $\vE$ and $\vH$ which are not needed in what follows.

The inverse source problem is to reconstruct the source $\vJ$ from boundary measurements, assuming that the coefficients $\mu$, $\varepsilon$, $\sigma$, $\lambda$ are known. A typical measurement is the tangential electric field on the boundary:
\begin{equation}\label{eq:meas}
\vg:=(\vn\times\vE)\times\vn|_{\partial\Omega} .
\end{equation}
This problem does not have a unique solution~\cite{Devaney_1973}. That is, distinct sources may give rise to the same boundary measurements.

\begin{remark} \label{rem:equivofmeas}
An alternative measurement is $\vh:=\vn\times\vH|_{\partial\Omega}$. Knowledge of $\vg$ is equivalent to knowledge of $\vh$ when the impedance boundary condition~\eqref{eq:imp} is taken into account, since 
$ \vh = -\lambda \vg $ on $\partial\Omega$.
\end{remark}

We now examine the effect of acoustic modulation. Following~\cite{Li_2021,Bal_2010}, we consider a system of charge carriers in a fluid, in which a small-amplitude acoustic plane wave propagates. It follows that the current density $\vJ_{\delta}$ is modulated according to
\begin{align}
\vJ_{\delta} & = \vJ (1+\delta \cos(\vk\cdot \vx + \varphi)) ,
\end{align}
where $\vJ$ is the conductivity in the absence of the acoustic wave, $\delta \ll 1$ is a small parameter that is proportional to the acoustic pressure, $\vk$ is the wave vector of the acoustic wave and $\varphi$ is its phase. Likewise, the conductivity $\sigma_\delta$ and permittivity $\varepsilon_\delta$ are also modulated:
\begin{align*}
\varepsilon_{\delta} & = \varepsilon (1+\delta \geps  \cos(\vk\cdot \vx + \varphi)), \\
\sigma_{\delta} & = \sigma (1+\delta\gsig  \cos(\vk\cdot \vx + \varphi)), \\
\end{align*}
where $\sigma$ and $\varepsilon$ are the unmodulated conductivity and permittivity, and the constants $\geps, \gsig$ are known as the {elasto-electric constants}.
For simplicity we assume that the impedance $\lambda$ is not affected by the acoustic modulation. It follows that the modulated electric and magnetic fields
$\vE_{\delta}$ and $\vH_{\delta}$ satisfy the modified Maxwell equations
\begin{equation} \label{eq:pert}
    \begin{aligned}
i\omega \varepsilon_\delta  \,  \vE_\delta +  \cl \vH_\delta &= \vJ_\delta + \sigma_\delta \vE_\delta  & \quad\quad \text{in } \quad \Omega,\\
-i\omega \mu \vH_\delta +  \cl \vE_\delta &= 0 & \quad\quad \text{in } \quad \Omega,
\end{aligned}
\end{equation}
together with the boundary condition
\begin{equation}\label{eq:SilverE_pert}
\vH_\delta \times \vn   - \lambda (\vn\times\vE_\delta)\times\vn=0  \quad\quad \text{on }\quad \partial\Omega.
\end{equation}
The corresponding boundary measurement becomes
\begin{equation}\label{eq:UMmeas}
\vg_\delta:=(\vn\times\vE_\delta)\times\vn|_{\partial\Omega}.
\end{equation}

%

\bigskip
\section{Internal Functional}
In this section, we derive the internal functional from boundary measurements of the electric field. We also introduce the necessary function spaces and specify certain technical requirements on the conductivity and permittivity.

\subsection{Function Spaces}
We will use the following standard spaces to discuss the wellposedness of the Maxwell's equations~\cite{Cessenat1996}. Let $\Omega\subset\R^3$ be an open bounded set with a $C^{1,1}$ boundary, and
$$
\Hc = \left\{ \vu\in \Ltc : \cl \vu \in \Ltc \right\}.
$$
The norm on $\Hc$ is given by 
$$
\| u\|_{\Hc} =\big(\| \vu \|^2_{\Ltc} + \| \cl \vu \|^2_{\Ltc} \big)^{1/2} .
$$
The two tangential trace maps $ \Gamma_{\tau}$ and $\Pi_{\tau}$ have the following definitions
 $$
 \begin{aligned}
 \Gamma_{\tau}: \Hc &\rightarrow \Hmhdiv  , \\
 \vu &\mapsto \vn\times \vu|_{\partial\Omega}
 \end{aligned}
 $$ 
 and
 $$
 \begin{aligned}
 \Pi_{\tau}:  \Hc &\rightarrow \Hmhcl ,  \\
 \vu&\mapsto (\vn\times \vu)\times\vn|_{\partial\Omega},
 \end{aligned}
 $$
where
$$
\Hmhdiv = \left\{ \vu\in (\Hmh)^3 : \text{div}_{\partial\Omega}\vu \in \Hmh \right\},
$$
and
$$
\Hmhcl = \left\{ \vu\in (\Hmh)^3 : \text{curl}_{\partial\Omega}\vu \in \Hmh \right\}.
$$
Here $\text{div}_{\partial\Omega}$ is the surface divergence and $\text{curl}_{\partial\Omega}$ is the surface curl. The two spaces $\Hmhdiv$ and $\Hmhcl$ are dual to each other.
To handle the impedance boundary condition, we define the tangential trace of a vector field
\begin{equation}\label{eq:tangtrace}
\vphi_T:= \Pi_{\tau}(\vphi)= (\vn\times \vphi)\times\vn|_{\partial\Omega},
\end{equation}
and the space
$$
X=\left\{ \vu\in\Hc: u_T\in \Ltcb \right\}.
$$
The norm on $X$ is 
$$
\|\vu\|_X^2=\|\vu\|_{\Hc}^2+\|\vu_T\|_{\Ltcb}^2.
$$
We denote the $\Ltc$-inner product by
$$
(\vu,\vv)_{\Ltc}:=\int_\Omega \vu\cdot\vv^* d {\bf x}, \quad \vu,\vv\in\Ltc,
$$
and the $\Ltcb$ inner product by
$$
\langle\vu,\vv\rangle_{\Ltcb} :=\int_{\partial\Omega} \vu\cdot\vv^* d {\bf x}, \quad \vu,\vv\in\Ltcb ,
$$
where $*$ denotes the complex conjugate.
We denote the dual paring of $\vu\in\Hmhdiv$ and $\vv\in\Hmhcl$ by
$\langle\vu,\vv\rangle$.

\medskip
\subsection{Assumptions and Weak Formulation}
We will make the following assumptions throughout this paper.
\begin{enumerate}
\item[A-1.]
The domain $\Omega$ is an open bounded connected domain in~$\mathbb R^3$ with $C^{1,1}$ boundary.
\item[A-2.]
The medium is nonmagnetic with $\mu=\mu_0$ in $\Omega$, where $\mu_0$ is the magnetic permeability in vacuum. The coefficients $\varepsilon$ and $\sigma$ are real piecewise $H^3(\Omega)$ functions.
\item[A-3.]
There exists positive constants $K_1$ and $K_2$, such that
\begin{equation}
K_1>\varepsilon, \lambda >K_2>0,\quad K_1>\sigma\geq0,
\end{equation} 
and the conductivity $\sigma$ is nonzero.
\item[A-4.]
The source $\vJ$ is an $\Ltc$ vector field and is compactly supported in~$\Omega$.
\end{enumerate}

\textbf{Remark:} We conclude from A-2 that $\varepsilon$ and $\sigma$ are piecewise $C^1$ by the Sobolev embedding theorem. We conclude from A-3 that $K_1 > \varepsilon_\delta > K_2 >0$ and $\sigma_\delta\geq 0$, so long as $\delta$ is sufficiently small.

\medskip
The modulated Maxwell equations~\eqref{eq:pert} and the impedance boundary condition~\eqref{eq:SilverE_pert} can be written in terms of only the electric field:
\begin{equation}\label{eq:forward}
\begin{aligned}
\cl \cl \vE_{\delta} - \mu(\omega^2\varepsilon_{\delta}+ i\omega\sigma_{\delta} )\vE_{\delta} = i\omega\mu \vJ_{\delta} \quad \text{in } \quad \Omega ,
\end{aligned}
\end{equation}
which is subject to the impedance boundary condition
\begin{equation}\label{eq:impE}
\left(\frac{1}{\mu} \cl \vE_\delta\right)\times \vn - i \omega\lambda (\vn\times\vE_\delta)\times\vn=0  \quad \text{on } \quad \partial\Omega .
\end{equation}

We say $\vE_\delta \in X$ is a {weak solution} of~\eqref{eq:forward} obeying the impedance boundary condition~\eqref{eq:impE} if for all $\vphi\in X$,
\begin{align}
\label{eq:weak}
\nonumber
&\left(\frac{1}{\mu} \cl \vE_\delta, \cl\vphi \right)_{\Ltc}  - \left((\omega^2\varepsilon+ i\omega\sigma)\vE_\delta, \vphi\right)_{\Ltc} - i\omega\langle \lambda \vE_{\delta T}, \vphi_T\rangle \\
&= i\omega \left(\vJ, \vphi\right)_{\Ltc}.
\end{align}
It follows from Assumptions A1-A4, that the weak solution $\vE_\delta\in X$ exists and is unique~\cite{Monk2003}. 

\subsection{Internal Functional}
We now derive the internal functional for both classical and weak solutions.
To proceed, we consider the fields $\vF$ and $\vG$ which obey the  Maxwell equations without sources:
\begin{equation}\label{eq:conj1}
\begin{aligned}
i\omega \varepsilon  \,  \vF^* +  \cl \vG^* &=  \sigma \vF^* \quad \text{in } \quad \Omega,\\
-i\omega \mu \vG^* +  \cl \vF^* &= 0  \quad \text{in } \quad \Omega,
\end{aligned}
\end{equation}
along with the impedance boundary condition
$$
\vG^* \times \vn   - \lambda (\vn\times\vF^*)\times\vn = \mathfrak{g}  \quad \text{on } \quad\partial\Omega,
$$
where $\mathfrak{g}\in \Ltcb$. 
Equivalently, 
\begin{equation}\label{eq:conj2}
\begin{aligned}
& \cl \cl \vF^* - \mu(\omega^2\varepsilon + i\omega\sigma) \vF^*  = 0 \quad \text{in } \quad \Omega\\
&\left(\frac{1}{i{\omega}\mu} \nabla\times \vF^*\right) \times \vn   - \lambda (\vn\times\vF^*)\times\vn = \mathfrak{g}  \quad \text{on } \quad\partial\Omega.
\end{aligned}
\end{equation}
Note that (\ref{eq:conj1})  are explicitly solvable, since the required coefficients are known.
Next, we take the inner product of~\eqref{eq:pert} with $\vF^*$, the inner product of~\eqref{eq:conj2} with $\vE_\delta$, and then subtract to obtain
\begin{align*}
    \nabla \times  \nabla \times \vE_\delta \cdot \vF^* - \nabla \times  \nabla \times \vF^* \cdot \vE_\delta \\
    = \mu[\omega^2 (\varepsilon_\delta-\varepsilon) + i \omega (\sigma_\delta - \sigma) ] \vF^* \cdot \vE_\delta + i\mu\omega \vJ_\delta \cdot \vF^*.
\end{align*}
Integrating the above result over $\Omega$ and using the vector identity $(\nabla\times \vA)\cdot \vB = \nabla\cdot \left(\vA\times \vB\right) + (\nabla\times \vB)\cdot \vA$, we find that
\begin{align*}
    & \int_\Omega \left[\nabla\cdot \left(\frac{1}{\mu} \nabla \times \vE_\delta \times \vF^*\right) + \left(\nabla \times \vF^*\right)\cdot \left(\frac{1}{\mu} \nabla \times \vE_\delta\right)\right] d {\bf x} \\
    & - \int_\Omega \left[\nabla\cdot \left(\frac{1}{\mu} \nabla \times \vF^* \times \vE_\delta\right) + \left(\nabla \times \vE_\delta\right)\cdot \left(\frac{1}{\mu} \nabla \times \vF^*\right)\right]d {\bf x} \\
    = &\int_\Omega [\omega^2 (\varepsilon_\delta-\varepsilon) + i \omega (\sigma_\delta - \sigma) ] \vF^* \cdot \vE_\delta d {\bf x} + i\omega \vJ_\delta \cdot \vF^*
\end{align*}
We now integrate by parts the divergence terms, which using the relations $\frac{1}{\mu} \nabla \times \vE_\delta = i\omega \vH_\delta$ and $\frac{1}{\mu} \nabla \times \vF^* = i\omega \vG^*$ yields
\begin{align} 
   & i\omega \int_{\partial\Omega} \vn \cdot (  \vH_\delta \times \vF^*)d {\bf x} - i\omega \int_{\partial\Omega} \vn \cdot (\vG^* \times \vE_\delta) d {\bf x} \nonumber \\
    &= \int_\Omega [\omega^2 (\varepsilon_\delta-\varepsilon) + i \omega (\sigma_\delta - \sigma) ] \vF^* \cdot \vE_\delta d {\bf x} + i\omega \vJ_\delta \cdot \vF^*. \label{eq:diff}
\end{align}
Note that the boundary integral only depends on the tangential components of the fields $\vH_{\delta}$, $\vE_{\delta}$, $\vF$ and $\vG$, which are known from the boundary measurements~\eqref{eq:UMmeas}. Therefore, the left-hand side can be determined from experiment. For the right-hand side, we consider the asymptotic expansion in the small quantity $\delta$. The $O(1)$ term is
$$
i\omega \int_\Omega \vJ_\delta \cdot \vF^*.
$$
The  $O(\delta$) term is of the form
\begin{equation} \label{eq:linearterm}
\int_\Omega 
\left[ \left(\omega^2 \varepsilon \gamma_\varepsilon + i\omega \sigma \gamma_\sigma \right) \vF^*\cdot \vE + i\omega \gamma_J \vJ \cdot \vF^* \right] \cos(\vk\cdot \vx + \varphi))d {\bf x}.
\end{equation}
Varying $\vk$ and $\varphi$ in~\eqref{eq:linearterm}, and performing the inverse Fourier transform, we obtain the 
internal functional
\begin{equation}\label{eq:intscalar}
Q:=\left(\omega^2 \varepsilon \gamma_\varepsilon + i\omega \sigma \gamma_\sigma \right) \vF^*\cdot \vE + i\omega \gamma_J \vJ \cdot \vF^* ,
\end{equation}
which is known at every point in $\Omega$.

We make the following hypothesis to extract more information from the 
internal function \eqref{eq:intscalar}:
\begin{hypothesis}\label{hypo:indep}
There exists a finite open cover $\left\{\Omega_{\alpha}\right\}_{\alpha\in\Lambda}$ of $\Omega$, such that for each $\alpha\in\Lambda$, there exist three solutions to \eqref{eq:conj2} in $\Omega$, denoted $\vF_{1\alpha}^*$,  $\vF_{2\alpha}^*$ and $\vF_{3\alpha}^*$, that are linearly independent on $X_{\alpha}$.
\end{hypothesis}

The hypothesis means that, in each $\Omega_\alpha$, we can form the non-singular matrix $[\vF^*_{1\alpha},\vF^*_{2\alpha},\vF^*_{3\alpha}]$, where $\vF^*_{j\alpha}$ is the $j$th column, $j=1,2,3$. Let $Q_{j\alpha}$ be the internal functional defined as in~\eqref{eq:intscalar}, with $\vF^*$ replaced by $\vF_{j\alpha}^*$. Given the row vector $[Q_{1\alpha}, Q_{2\alpha}, Q_{3\alpha}]$, we have 
$$
[Q_{1\alpha}, Q_{2\alpha}, Q_{3\alpha}]^T = [\vF_{1\alpha}^*, \vF_{2\alpha}^*, \vF_{3\alpha}^*]^T \left[ \left(\omega^2 \varepsilon \gamma_\varepsilon + i\omega \sigma \gamma_\sigma \right) \vE + i\omega \gamma_J \vJ \right], \quad \text{ in } \Omega_\alpha
$$
where we view $\vE$ and $\vJ$ as column vectors, and $T$ denotes the transpose.
Therefore, if we define $\vQ \in (L^2(\Omega))^3$ by specifying its restrictions according to
$$
\vQ|_{\Omega_\alpha} := [\vF_{1\alpha}^*, \vF_{2\alpha}^*, \vF_{3\alpha}^*]^{-T}
[Q_{1\alpha}, Q_{2\alpha}, Q_{3\alpha}]^T,
$$
then $\vQ$ is well-defined since both $\vE$ and $\vJ$ are global vector fields over $\Omega$, and we have
\begin{equation} \label{eq:intvector}
\vQ = 
 i\omega \gamma_J \vJ + \left(\omega^2 \varepsilon \gamma_\varepsilon + i\omega \sigma \gamma_\sigma \right) \vE.
\end{equation}
Note that we view $\vQ$ as a vector-valued internal functional.

\section{Inverse Problem and Internal Functional}

It follows from the above discussion that the inverse problem consists of recovering the source current  $\vJ$ from the internal functional $\vQ$.  In this section we will derive a reconstruction procedure that uniquely recovers $\vJ$ with Lipschitz stability. The analysis depends critically on whether the constant $\gamma_J$ vanishes. 

\subsection{Case I: $\gamma_J=0$.}
In this situation, the equality~\eqref{eq:intvector} does not involve $\vJ$ directly. 

\begin{proposition} \label{thm:case1}
Suppose the assumptions A1-A4 and the hypothesis~\eqref{hypo:indep} hold. If $\gamma_J=0$, then we have the following two subcases:
\begin{enumerate}
    \item[(I.1)] If $\Omega\subseteq \supp(\omega^2 \varepsilon \gamma_\varepsilon + i\omega \sigma \gamma_\sigma)$, then the source $\vJ$ is uniquely determined with the stability estimate
$$\|\vJ - \tilde{\vJ}\|_{X^*} \leq C \Big\|\frac{\vQ - \tilde{\vQ}}{\omega^2 \varepsilon \gamma_\varepsilon + i\omega \sigma \gamma_\sigma}\Big\|_X ,
$$
for some constant $C>0$ independent of $\vJ, \tilde{\vJ}$.
    \item[(I.2)] If $\Omega\not\subseteq \supp(\omega^2 \varepsilon \gamma_\varepsilon + i\omega \sigma \gamma_\sigma)$, then the source $\vJ$ cannot be uniquely determined.
\end{enumerate}
Moreover, whenever $\vJ$ is uniquely determined, there are explicit reconstruction procedures.
\end{proposition}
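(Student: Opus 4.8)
The plan is to exploit the explicit relation \eqref{eq:intvector}, which for $\gamma_J=0$ reduces to $\vQ = c\,\vE$ with the \emph{known} scalar coefficient $c:=\omega^2\varepsilon\gamma_\varepsilon + i\omega\sigma\gamma_\sigma$. The forward map $\vJ\mapsto\vE$ sending a source to the weak solution of \eqref{eq:forward}--\eqref{eq:impE} is injective: if $\vE=0$, then \eqref{eq:weak} gives $(\vJ,\vphi)_{\Ltc}=0$ for all $\vphi\in X$, and since $C_c^\infty(\Omega)^3\subset X$ is dense in $\Ltc$ we get $\vJ=0$. Hence recovering $\vJ$ is equivalent to recovering $\vE$, and the whole question collapses to whether the pointwise multiplier $\vE\mapsto c\,\vE$ destroys information. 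This is exactly the dichotomy governed by $\supp(c)$.

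For (I.1), here $c\neq0$ a.e.\ (its zero set has empty interior and, under A-2, is negligible), so I would set $\vE:=\vQ/c$, which determines $\vE$ uniquely, and then read off the source from the field through the Maxwell operator,
\[
\vJ = \frac{1}{i\omega\mu}\Big(\cl\cl\vE - \mu(\omega^2\varepsilon + i\omega\sigma)\vE\Big),
\]
interpreted weakly so that no second derivative of $\vE$ is needed: $\vJ$ is the element of $X^*$ represented by the right-hand side of \eqref{eq:weak}. For stability I subtract the weak formulations for two data sets, with solutions $\vE=\vQ/c$, $\tilde{\vE}=\tilde{\vQ}/c$ and sources $\vJ,\tilde{\vJ}$: for every $\vphi\in X$,
\[
\begin{aligned}
i\omega(\vJ - \tilde{\vJ}, \vphi)_{\Ltc}
&= \Big(\frac{1}{\mu}\cl(\vE - \tilde{\vE}), \cl\vphi\Big)_{\Ltc}
- \big((\omega^2\varepsilon + i\omega\sigma)(\vE - \tilde{\vE}),\vphi\big)_{\Ltc}\\
&\quad - i\omega\langle\lambda(\vE-\tilde{\vE})_T, \vphi_T\rangle .
\end{aligned}
\]
The right-hand side is a bounded sesquilinear form on $X\times X$: the curl term is controlled by $\mu=\mu_0$, the volume term by $\varepsilon,\sigma\le K_1$ from A-3, and the boundary term by $\lambda\le K_1$ together with the fact that the $X$-norm dominates both $\|\cdot\|_{\Hc}$ and the tangential trace in $\Ltcb$. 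Taking the supremum over $\|\vphi\|_X\le1$ yields $\|\vJ-\tilde{\vJ}\|_{X^*}\le C\|\vE-\tilde{\vE}\|_X = C\|(\vQ-\tilde{\vQ})/c\|_X$.

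For (I.2) I would construct an explicit violation of uniqueness. Since $\Omega\not\subseteq\supp(c)$, the open set $\Omega\setminus\supp(c)$ is nonempty; choose a nonempty open $U$ with $\overline U\subset\Omega\setminus\supp(c)$, so that $c\equiv0$ on $U$, and pick any nonzero smooth field $W$ compactly supported in $U$. Define the perturbation $\delta\vJ:=\frac{1}{i\omega\mu}\big(\cl\cl W - \mu(\omega^2\varepsilon + i\omega\sigma)W\big)$, an $\Ltc$ field compactly supported in $U\subset\subset\Omega$ (hence admissible under A-4) and nonzero, since $\delta\vJ=0$ would force $W$ to solve the homogeneous problem with homogeneous boundary data and thus vanish. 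Because $W$ vanishes near $\partial\Omega$, the weak solution for the source $\vJ+\delta\vJ$ is exactly $\vE+W$; but $cW\equiv0$, as $W$ is supported where $c=0$, so $c(\vE+W)=c\vE=\vQ$ is unchanged. Thus two distinct admissible sources produce the same internal functional, and $\vJ$ is not uniquely determined.

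I expect the main obstacle to be keeping the stability estimate in the correct dual norm. Since $\vE=\vQ/c$ carries only $H(\curl)$-type regularity, the two curls cannot be taken classically; the argument must be organized entirely through the weak form \eqref{eq:weak}, so that $\vJ$ is recovered as a functional on $X$ and the bound $\|\vJ-\tilde{\vJ}\|_{X^*}\le C\|\vE-\tilde{\vE}\|_X$ comes from continuity of the sesquilinear form rather than from elliptic regularity. A secondary subtlety is the precise meaning of $\Omega\subseteq\supp(c)$ in (I.1): one must use A-2 to argue that the zero set of $c$ is negligible, so that $\vE=\vQ/c$ is defined a.e.\ and genuinely recovers $\vE$, and correspondingly that in (I.2) the complementary blind region truly supports a nontrivial source.
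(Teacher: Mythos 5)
Your treatment of (I.1) is in substance the paper's proof verbatim: divide $\vQ$ by the known coefficient $c=\omega^2\varepsilon\gamma_\varepsilon+i\omega\sigma\gamma_\sigma$ to recover $\vE$, read $\vJ$ off through the weak formulation so that no second derivatives of $\vE$ are needed, and obtain the $X^*$ bound by subtracting the two weak formulations and using boundedness of the coefficients (A-2, A-3) to control the sesquilinear form on $X\times X$. The only genuine divergence is in (I.2): the paper perturbs by a gradient, replacing $(\vE,\vH)$ with $(\vE+\nabla\phi,\vH)$ for $\phi\in C_c^\infty(D)$, $D\subseteq\Omega\setminus\supp(c)$, which leaves $\vH$ untouched and produces the explicit non-radiating source $\vJ_\phi=(i\omega\varepsilon-\sigma)\nabla\phi$, whose non-triviality is immediate pointwise since $\varepsilon\geq K_2>0$; you instead add an arbitrary smooth compactly supported field $W$ and define $\delta\vJ$ by applying the Maxwell operator to $W$. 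Your version is a mild generalization (the paper's construction is the special case $W=\nabla\phi$), and it is correct, but it buys that generality at the cost of invoking uniqueness of the forward problem to conclude $\delta\vJ\neq 0$, whereas the paper's gradient choice makes non-triviality transparent and exhibits the classical structure of non-radiating sources. One caution on a point you flag yourself: your parenthetical claim in (I.1) that $\Omega\subseteq\supp(c)$ forces the zero set of $c$ to be Lebesgue-negligible does not follow from A-2 alone --- a $C^1$ function can vanish exactly on a closed set of positive measure with empty interior --- though the paper performs the same division without comment, so relative to the paper's own standard this is a shared gloss rather than a gap in your argument.
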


\begin{proof}
If $\Omega\subseteq \supp(\omega^2 \varepsilon \gamma_\varepsilon + i\omega \sigma \gamma_\sigma)$,
then \eqref{eq:intvector} implies
$$
\vE = \frac{\vQ}{\omega^2 \varepsilon \gamma_\varepsilon + i\omega \sigma \gamma_\sigma}.
$$
This uniquely determines the weak solution $\vE\in X$ everywhere in $\Omega$. Consequently, $\vH$ and $\vJ$ are also uniquely determined via the Maxwell's equations~\eqref{eq:frequency}. Note that all these procedures are constructive: given $\vQ$, we compute $\vE$ from the above equality, and then $\vJ$ from~\eqref{eq:frequency}.

The stability can be derived as follows. If there is another source $\tilde{\vJ}$ with corresponding electric field $\tilde{\vE}$, and vector internal functional $\tilde{\vQ}$ defined as in~\eqref{eq:intvector}, then $\vE - \tilde{\vE}$ is a weak solution of the Maxwell equations. That is,
\begin{align*}
\left(\frac{1}{\mu} \cl (\vE - \tilde{\vE}), \cl\vphi \right)_{\Ltc}  - \left((\omega^2\varepsilon+ i\omega\sigma)(\vE - \tilde{\vE}), \vphi\right)_{\Ltc} \\
- i\omega\langle \lambda (\vE - \tilde{\vE})_T, \vphi_T\rangle= i\omega \left(\vJ - \tilde{\vJ}, \vphi\right)_{\Ltc}
\end{align*}
for all $\vphi\in X$. As the coefficients in this weak formulation are all bounded, there exists a constant $C>0$ such that 
$$
\left|\omega \left(\vJ - \tilde{\vJ}, \vphi\right)\right| \leq C \|\vE - \tilde{\vE}\|_X  \; \|\phi\|_X.
$$
We deduce that
$$\|\vJ - \tilde{\vJ}\|_{X^*} \leq C \|\vE - \tilde{\vE}\|_X = 
C \left\| \frac{\vQ - \tilde{\vQ}}{\omega^2 \varepsilon \gamma_\varepsilon + i\omega \sigma \gamma_\sigma} \right\|_X.
$$

If $\Omega\nsubseteq \supp(\omega^2 \varepsilon \gamma_\varepsilon + i\omega \sigma \gamma_\sigma)$, there exists an open set $D\subseteq \Omega\backslash \supp(\omega^2 \varepsilon \gamma_\varepsilon + i\omega \sigma \gamma_\sigma)$. For any compactly supported smooth function
$\phi\in C^\infty_c(D)$, if $(\vE,\vH)$ solves~\eqref{eq:frequency}, then $(\vE+\nabla\phi,\vH)$ solves~\eqref{eq:frequency} with $\vJ$ replaced by $\vJ+(i\omega\varepsilon-\sigma)\nabla\phi$. Moreover, since $(\vE+\nabla\phi,\vH)|_{\partial\Omega} = (\vE,\vH)$, these two pairs both satisfy the boundary condition~\eqref{eq:imp} and produce identical measurement~\eqref{eq:meas}. This means that sources of the form
$\vJ_\phi := (i\omega\varepsilon-\sigma)\nabla\phi$ are non-radiating. Thus the source $\vJ$ cannot be uniquely determined from the boundary measurement~\eqref{eq:meas}.
\end{proof}

\subsubsection{Increased Regularity}
The stability estimate for the subcase $\gamma_J=0$ and $\Omega\subseteq \supp(\omega^2 \varepsilon \gamma_\varepsilon + i\omega \sigma \gamma_\sigma)$
is in terms of the $X^*$ norm, which follows because $\vJ$ was obtained from $\vE$ using a weak formulation. When the reconstructed $\vE$ and $\vH$ are smooth enough, for example, when $\vE\in(H^2(\Omega))^3$, we can utilize the strong formulation to control $\vJ$ in $\Ltc$ in terms of the higher order derivatives of the internal data.

\begin{proposition}
\label{lem:regC5}
Suppose the assumptions A1-A4 and the hypothesis~\eqref{hypo:indep} hold. Suppose, in addition, that $\varepsilon,\sigma\in C^{1,1}(\Omega)$.
If $\gamma_J=0$ and $\Omega\subseteq \supp(\omega^2 \varepsilon \gamma_\varepsilon + i\omega \sigma \gamma_\sigma)$, then the following stability estimate holds for any two compactly supported sources $\vJ, \tilde{\vJ}\in (H^2(\Omega))^3$:
$$
\|\vJ-\tilde{\vJ}\|_{\Ltc} \leq C \left\|\frac{\vQ - \tilde{\vQ}}{\omega^2 \varepsilon \gamma_\varepsilon + i\omega \sigma \gamma_\sigma}\right\|_{(H^2(\Omega_1))^3}.
$$
Here $\Omega_1$ is an open set compactly contained in $\Omega$ such that $\supp{\vJ}\subset\Omega_1$ and $\supp{\tilde{\vJ}}\subset\Omega_1$, and the constant $C>0$ is independent of $\vJ, \tilde{\vJ}$.
\end{proposition}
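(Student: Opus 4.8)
The plan is to replace the weak formulation used in Proposition~\ref{thm:case1} by the strong (pointwise) form of the Maxwell system, which becomes legitimate once $\vE$ is known to lie locally in $(H^2)^3$. Throughout, abbreviate $m := \omega^2\varepsilon\gamma_\varepsilon + i\omega\sigma\gamma_\sigma$, which by subcase (I.1) is bounded below in modulus on $\Omega$ and satisfies $\vE = \vQ/m$, $\tilde{\vE} = \tilde{\vQ}/m$. Consequently $\vE-\tilde{\vE} = (\vQ-\tilde{\vQ})/m$, so the right-hand side of the claimed inequality is exactly $\|\vE-\tilde{\vE}\|_{(H^2(\Omega_1))^3}$, and it suffices to bound $\|\vJ-\tilde{\vJ}\|_{\Ltc}$ by this quantity.

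First I would secure the regularity needed to make sense of the strong form. Since $\varepsilon,\sigma\in C^{1,1}(\Omega)$, the coefficient $m$ lies in $C^{1,1}(\Omega)$ and, being bounded below, has $1/m\in C^{1,1}$, so division by $m$ preserves $H^2$ regularity; combined with $\vJ\in(H^2(\Omega))^3$ and interior elliptic regularity for the Maxwell system, this yields $\vE,\tilde{\vE}\in (H^2(\Omega_1))^3$ (equivalently, it guarantees the right-hand side is finite, the estimate being vacuous otherwise). With $\vE-\tilde{\vE}\in (H^2(\Omega_1))^3$ in hand, I would test the weak identity~\eqref{eq:weak} against $\vphi\in C_c^\infty(\Omega_1)$, for which the boundary term drops; since $\mu=\mu_0$ is constant, an integration by parts of the curl--curl term recovers the pointwise equation
\begin{equation*}
\cl\cl(\vE-\tilde{\vE}) - \mu(\omega^2\varepsilon + i\omega\sigma)(\vE-\tilde{\vE}) = i\omega\mu(\vJ-\tilde{\vJ}) \quad \text{a.e. in } \Omega_1 .
\end{equation*}

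Solving for the source difference gives $\vJ-\tilde{\vJ} = (i\omega\mu)^{-1}\big[\cl\cl(\vE-\tilde{\vE})-\mu(\omega^2\varepsilon+i\omega\sigma)(\vE-\tilde{\vE})\big]$ on $\Omega_1$. Because $\vJ$ and $\tilde{\vJ}$ are both supported in $\Omega_1$, their difference vanishes outside $\Omega_1$, so $\|\vJ-\tilde{\vJ}\|_{\Ltc}=\|\vJ-\tilde{\vJ}\|_{(L^2(\Omega_1))^3}$. I would then estimate termwise: the curl--curl term is controlled by second derivatives, hence by $\|\vE-\tilde{\vE}\|_{(H^2(\Omega_1))^3}$, while the zeroth-order term is controlled by $\|\vE-\tilde{\vE}\|_{(L^2(\Omega_1))^3}\le\|\vE-\tilde{\vE}\|_{(H^2(\Omega_1))^3}$ using the boundedness $\varepsilon,\sigma<K_1$ from A-3. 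Collecting constants yields $\|\vJ-\tilde{\vJ}\|_{\Ltc}\le C\,\|\vE-\tilde{\vE}\|_{(H^2(\Omega_1))^3}$, which is the assertion.

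The main obstacle is the interior $H^2$ regularity invoked in the second step: the curl--curl operator is not elliptic, since its kernel contains all gradients, so $H^2$ control does not follow from the equation alone. The standard remedy is to take the divergence of the Maxwell equation to obtain an auxiliary first-order relation for $\nabla\cdot\vE$, and then to combine the identity $\cl\cl=\nabla(\nabla\cdot)-\Delta$ with this divergence control to close the elliptic estimate; this is precisely where the $C^{1,1}$ hypothesis on $\varepsilon,\sigma$ is consumed. If one prefers to avoid proving regularity, the proposition may be read as a conditional stability estimate, in which case the only genuine content is the weak-to-strong passage and the termwise bound above.
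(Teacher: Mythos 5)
Your proposal is correct and follows essentially the same route as the paper: set $\vu=\vE-\tilde{\vE}=(\vQ-\tilde{\vQ})/(\omega^2\varepsilon\gamma_\varepsilon+i\omega\sigma\gamma_\sigma)$, pass to the strong form of the Maxwell system on $\Omega_1$, bound $\vJ-\tilde{\vJ}$ termwise, and secure finiteness of the $H^2$ norm by exactly the mechanism you name --- taking the divergence of the equation and combining it with $\cl\cl\vu=\nabla(\nabla\cdot\vu)-\Delta\vu$ to obtain an elliptic system, which the paper then bootstraps via two applications of~\cite[Lemma 6.32]{Folland1995} ($L^2\to H^1_{\text{loc}}\to H^2_{\text{loc}}$, using the $C^{1,1}$ coefficients and $\vJ,\tilde{\vJ}\in(H^2(\Omega))^3$, finishing with a cutoff supported in $\Omega$ and equal to one on $\Omega_1$). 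The only difference is expository: you sketch this regularity bootstrap where the paper carries it out in detail, but the idea and the way the hypotheses are consumed coincide.
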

\begin{proof}
Define
$$\vu:=\vE - \tilde{\vE} = \frac{\vQ - \tilde{\vQ}}{\omega^2\varepsilon\gamma_\varepsilon + i\omega\sigma\gamma_\sigma} .
$$
Then $\vu$ solves
\begin{equation} \label{eq:uu}
\nabla\times\frac{1}{\mu}\nabla\times \vu - (\omega^2\varepsilon + i\omega\sigma) \vu = i\omega (\vJ-\tilde{\vJ}).
\end{equation}
The following stability estimate is immediate:
$$
\|\vJ-\tilde{\vJ}\|_{\Ltc)} = \|\vJ-\tilde{\vJ}\|_{(L^2(\Omega_1))^3} \leq C \|\vu\|_{(H^2(\Omega_1))^3} =  C\left|\frac{\vQ - \tilde{\vQ}}{\omega^2 \varepsilon \gamma_\varepsilon + i\omega \sigma \gamma_\sigma}\right\|_{(H^2(\Omega_1))^3}.
$$
It remains to show that the quantity 
$$
\|\vu\|_{(H^2(\Omega_1))^3} = \left\|\frac{\vQ - \tilde{\vQ}}{\omega^2 \varepsilon \gamma_\varepsilon + i\omega \sigma \gamma_\sigma}\right\|_{H^2(\Omega_1)}
$$ 
is finite. To proceed we will employ an interior regularity estimate for elliptic equations.
Denote by $a:=\omega^2\varepsilon >0$, $b:=i\omega\sigma \geq 0$, $\vf:= i\omega (\vJ-\tilde{\vJ})$. Then take the divergence of~\eqref{eq:uu} to obtain
\begin{equation} \label{eq:divergence}
 \nabla\cdot \vu = - \frac{\nabla (a+i b) \cdot \vu +  \nabla\cdot \vf }{a+i b}. 
\end{equation}
Using the identity $\cl\cl\vu=\nabla(\nabla\cdot\vu)-\Delta\vu$, we obtain the following elliptic system
$$
\Delta\vu = - \nabla \left[ \frac{\nabla (a+i b) \cdot \vu +  \nabla\cdot \vf }{a+i b} \right] - \mu (a+i b) \vu - \mu \vf. 
$$
For each component of $\vu$, the left hand side of the above defines a second order elliptic operator with constant coefficients, so we can apply an interior regularity estimate~\cite[Lemma 6.32]{Folland1995}. Since $a$ is bounded away from zero and $a,b$ are piecewise $C^{1,1}(\Omega)$, the following quantities are all bounded in~$\overline{\Omega}$:
$$
|a+i b|, \  \left| \nabla\cdot\frac{\nabla (a+i b)}{a+i b}\right|, \ \left| \frac{\nabla (a+i b)}{a+i b}\right|, \  \left| \nabla \frac{1}{a+i b}\right|, \  \left| \frac{1}{a+i b}\right|.
$$
Thus
$$
\begin{aligned}
\nabla \left( \frac{\nabla (a+i b) \cdot \vu +  \nabla\cdot \vf }{a+i b} \right) + \mu (a+i b) \vu + \mu \vf  \in (H_{\text{loc}}^{-1}(\Omega))^3.
\end{aligned}
$$
Combing this with the fact that $\vu\in\Ltc$, we obtain from~\cite[Lemma 6.32]{Folland1995} that $\vu\in (H_{\text{loc}}^{1}(\Omega))^3$. 
This increased regularity implies that   
$$
\begin{aligned}
\nabla \left[ \frac{\nabla (a+i b) \cdot \vu +  \nabla\cdot \vf }{a+i b} \right] +& \mu (a+i b) \vu + \mu \vf  \in {(L_{\text{loc}}^2(\Omega))^3}.
\end{aligned}
$$
Applying~\cite[Lemma 6.32]{Folland1995} again, we obtain that $\vu\in (H_{\text{loc}}^{2}(\Omega))^3$. 
Next, choose a smooth cutoff function $\chi$ that is compactly supported in $\Omega$ and equal to one on $\Omega_1$. We find that 
$$
\|\vu\|_{(H^2(\Omega_1))^3} \leq \|\chi\vu\|_{(H^2(\R^3))^3} <\infty.
$$
Thus we obtain that $\vu \in H^2(\Omega_1)$. 
\end{proof}

\subsection{Case II: $\gamma_J\neq 0$.}
Here~\eqref{eq:intvector} implies that
\begin{equation} \label{eq:JQE}
\vJ = \frac{\vQ - \left(\omega^2 \varepsilon \gamma_\varepsilon + i\omega \sigma \gamma_\sigma \right) \vE}{i\omega\gamma_J}.
\end{equation}
Inserting the above into the Maxwell equations~\eqref{eq:frequency}, we obtain an equation of the form
\begin{equation} \label{eq:Eeq}
\nabla\times\frac{1}{\mu}\nabla\times \vE - (a+i b) \vE = \frac{\vQ}{\gamma_J} ,
\end{equation}
where 
$$
a= \omega^2\varepsilon \left(1-\frac{\gamma_\varepsilon}{\gamma_J}\right), \quad\quad
b= \omega\sigma \left(1-\frac{\gamma_\sigma}{\gamma_J}\right).
$$
Note that there are boundary constraints for $\vE$, including the impedance boundary condition~\eqref{eq:imp} and the measurement~\eqref{eq:meas}.

To analyze the stability of the inverse problem, suppose that there is another source $\tilde{\vJ}$ with corresponding electric field $\tilde{\vE}$ and vector internal functional $\tilde{\vQ}$, defined by~\eqref{eq:intvector}. Let $\vu:= \vE - \tilde{\vE} \in X$ be a weak solution of the equation
\begin{equation} \label{eq:ueq}
\nabla\times\frac{1}{\mu}\nabla\times \vu - (a+i b) \vu = \frac{\vQ - \tilde{\vQ}}{\gamma_J} \quad\quad \text{ in } \quad\Omega;
\end{equation}
and obey the impedance boundary condition
\begin{equation} \label{eq:uimp}
\left(\frac{1}{\mu} \cl \vu\right)\times \vn - i \omega\lambda (\vn\times\vu)\times\vn=0  \quad \text{on } \quad\partial\Omega ,
\end{equation}
where
\begin{equation} \label{eq:umeas}
(\vn\times \vu)\times\vn|_{\partial\Omega} = \vg - \tilde{\vg}.
\end{equation}

It remains to establish the solvability of \eqref{eq:Eeq} with boundary condition \eqref{eq:imp}  or the solvability of \eqref{eq:ueq} with boundary condition \eqref{eq:uimp}.
Now \eqref{eq:Eeq} and \eqref{eq:ueq} are similar in form to~\eqref{eq:forward}. The difference is that in~\eqref{eq:forward}, the term $(\omega^2\varepsilon_\delta+i\omega\sigma_\delta)$ has a strictly positive real part and a non-negative imaginary part, which ensures the existence and uniqueness of the weak solution by standard methods~\cite{Monk2003}. These sign conditions no longer hold for the term $a+ib$ in~\eqref{eq:Eeq} and \eqref{eq:ueq}, due to the presence of the elasto-electric constants $\gamma_\varepsilon, \gamma_\sigma,\gamma_J$.
Therefore, we divide the discussion into several sub-cases.  
By Assumption A-3, we see that $a$ is either identically zero or bounded away from zero, $b$ is either non-positive or non-negative. This observation accounts for the following classification of sub-cases.

\begin{theorem} 
\label{thm:case2}
Suppose the assumptions A1-A4 and the hypothesis~\eqref{hypo:indep} hold. If $\gamma_J\neq 0$, we have the following subcases:
\begin{enumerate}
    \item[(II.1)] If $\gamma_\varepsilon = \gamma_\sigma = \gamma_J$, then the source $\vJ$ cannot be uniquely determined. 
    \item[(II.2)] If $\gamma_\varepsilon = \gamma_J$, $\gamma_\sigma \neq \gamma_J$, and $\Omega\subseteq\supp{\sigma}$, then the source $\vJ$ is uniquely determined. If {in addition $|b|$ is strictly bounded away from zero}, then we have the following stability estimates. If ${\gamma_\sigma}/{\gamma_J}<1$,
$$
\|\vJ-\tilde{\vJ}\|_{\Ltc} \leq C \|\vQ-\tilde{\vQ}\|_{\Ltc} ,
$$
and if ${\gamma_\sigma}/{\gamma_J}>1$,
$$
\|\vJ-\tilde{\vJ}\|_{\Ltc} \leq C (\|\vQ-\tilde{\vQ}\|_{\Ltc} + \|\vg-\tilde{\vg}\|_{\Ltcb}).
$$
    \item[(II.3)] If $\gamma_\varepsilon = \gamma_J$, $\gamma_\sigma \neq \gamma_J$, and $\Omega\not\subseteq\supp{\sigma}$, then the source $\vJ$ cannot be uniquely determined.
    \item[(II.4)] If $\gamma_\varepsilon\neq\gamma_J$, then the source $\vJ$ is uniquely determined. If ${\gamma_\varepsilon}/{\gamma_J}>1$, we have the following stability estimate
$$
\|\vJ-\tilde{\vJ}\|_{\Ltc} \leq C \|\vQ-\tilde{\vQ}\|_{\Ltc},
$$
and if ${\gamma_\varepsilon}/{\gamma_J}<1$, 
$$
\|\vJ-\tilde{\vJ}\|_{\Ltc} \leq C (\|\vQ-\tilde{\vQ}\|_{\Ltc} + \|\vg-\tilde\vg\|_{\Ltcb}).
$$
\end{enumerate}
Here $C>0$ is a constant independent of $\vJ, \tilde{\vJ}$. Moreover, whenever $\vJ$ is uniquely determined, there are explicit reconstruction procedures.
\end{theorem}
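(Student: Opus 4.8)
The plan is to reduce all subcases to the analysis of equation~\eqref{eq:ueq} for the difference $\vu=\vE-\tilde{\vE}$, subject to the homogeneous impedance condition~\eqref{eq:uimp} and the measured constraint~\eqref{eq:umeas}, and then to read $\vJ-\tilde{\vJ}$ off from~\eqref{eq:JQE}. Because
$$
\vJ-\tilde{\vJ}=\frac{(\vQ-\tilde{\vQ})-(\omega^2\varepsilon\gamma_\varepsilon+i\omega\sigma\gamma_\sigma)\vu}{i\omega\gamma_J}
$$
and the coefficients are bounded, one has $\|\vJ-\tilde{\vJ}\|_{\Ltc}\le C(\|\vQ-\tilde{\vQ}\|_{\Ltc}+\|\vu\|_{\Ltc})$, so it suffices to control $\|\vu\|_{\Ltc}$ and, for uniqueness, to show $\vu=0$ when the data coincide. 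The workhorse is the energy identity obtained by testing~\eqref{eq:ueq} with $\vphi=\vu$ and integrating by parts using~\eqref{eq:uimp}; separating real and imaginary parts yields
\begin{align*}
\Big\|\tfrac{1}{\sqrt{\mu}}\cl\vu\Big\|_{\Ltc}^2-\int_\Omega a|\vu|^2\,d\bx &=\Re\Big(\tfrac{\vQ-\tilde{\vQ}}{\gamma_J},\vu\Big)_{\Ltc},\\
-\int_\Omega b|\vu|^2\,d\bx-\omega\int_{\partial\Omega}\lambda|\vu_T|^2\,d\bx &=\Im\Big(\tfrac{\vQ-\tilde{\vQ}}{\gamma_J},\vu\Big)_{\Ltc},
\end{align*}
where crucially $\vu_T=\vg-\tilde{\vg}$ is known from the measurement. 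The sign dichotomy for $a$ and $b$ forced by Assumption~A-3 then organizes the case analysis.

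For the non-uniqueness claims (II.1) and (II.3) I would reproduce the non-radiating construction of Proposition~\ref{thm:case1}. In each case there is a nonempty open set $D\subseteq\Omega$ on which $a+ib\equiv0$: all of $\Omega$ when $\gamma_\varepsilon=\gamma_\sigma=\gamma_J$, and the interior of $\Omega\setminus\supp\sigma$ when $\gamma_\varepsilon=\gamma_J$, $\gamma_\sigma\neq\gamma_J$ and $\Omega\not\subseteq\supp\sigma$. For $\phi\in C^\infty_c(D)$ the pair $(\vE+\nabla\phi,\vH)$ solves the Maxwell system with source shifted by $\vJ_\phi=(i\omega\varepsilon-\sigma)\nabla\phi$; it has the same tangential trace, hence the same measurement, and a direct computation shows the change in the internal functional is $-\gamma_J(a+ib)\nabla\phi\equiv0$ on $D$. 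Hence distinct sources yield identical data and $\vJ$ is not determined.

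The four remaining regimes close by energy alone. When $\gamma_\varepsilon/\gamma_J>1$ we have $a\le-c<0$, so the real identity gives $\|\tfrac{1}{\sqrt\mu}\cl\vu\|^2+c\|\vu\|_{\Ltc}^2\le\|(\vQ-\tilde{\vQ})/\gamma_J\|\,\|\vu\|$ and hence $\|\vu\|_{\Ltc}\le C\|\vQ-\tilde{\vQ}\|_{\Ltc}$ with no boundary term; this is (II.4) in the coercive regime. In (II.2) one has $a\equiv0$; if $\gamma_\sigma/\gamma_J<1$ then $b\ge c>0$ and the imaginary identity directly gives $c\|\vu\|_{\Ltc}^2\le\|(\vQ-\tilde{\vQ})/\gamma_J\|\,\|\vu\|$, while if $\gamma_\sigma/\gamma_J>1$ then $b\le-c<0$ and the boundary integral has the wrong sign, so I move the known quantity $\omega\int_{\partial\Omega}\lambda|\vu_T|^2$ to the right-hand side to obtain $c\|\vu\|_{\Ltc}^2\le C\|\vg-\tilde{\vg}\|_{\Ltcb}^2+\|(\vQ-\tilde{\vQ})/\gamma_J\|\,\|\vu\|$, whence the stated estimate via Young's inequality. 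Uniqueness in (II.2) without the lower bound on $|b|$ follows by taking equal data: then $\cl\vu=0$ from the real identity, so $b\vu=0$ forces $\vu=0$ on the dense set $\{\sigma\neq0\}$, and the curl-free field $\vu$ vanishes throughout.

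The genuinely hard subcase is (II.4) with $\gamma_\varepsilon/\gamma_J<1$, i.e. $a\ge c>0$. Here the real identity is indefinite and no combination of the two identities controls $\|\vu\|$, since a resonant field with large curl can balance $\int_\Omega a|\vu|^2$; energy estimates alone fail. I would instead invoke the Fredholm theory for the Maxwell impedance operator: a G{\aa}rding inequality together with the compactness results for $X$ in~\cite{Monk2003,Cessenat1996} shows that $\vu\mapsto\cl\tfrac1\mu\cl\vu-(a+ib)\vu$ with boundary condition~\eqref{eq:uimp} is Fredholm of index zero with a finite-dimensional kernel $N\subset X$. Uniqueness then comes from the measurement: with equal data, $\vu$ has vanishing Cauchy data ($\vu_T=0$ and, by~\eqref{eq:uimp}, $\cl\vu\times\vn=0$), so unique continuation for the associated second-order system forces $\vu=0$; the same argument shows the trace map $\vv\mapsto\vv_T$ is injective, hence bounded below, on the finite-dimensional space $N$. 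Writing $\vu=\vu^\perp+\vu_N$ with $\vu^\perp\perp N$, the Fredholm estimate bounds $\|\vu^\perp\|_X\le C\|\vQ-\tilde{\vQ}\|$ (a solution exists, so the right-hand side lies in the range), while $\|\vu_N\|_X\le C\|(\vu_N)_T\|_{\Ltcb}\le C(\|\vg-\tilde{\vg}\|_{\Ltcb}+\|\vu^\perp\|_X)$; summing gives the estimate with both data terms. The main obstacle is exactly this step: establishing the G{\aa}rding inequality in the $X$-norm and securing a unique continuation principle valid for the Maxwell system with the merely piecewise-$H^3$ coefficients permitted by Assumption~A-2.
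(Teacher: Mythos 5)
Your treatment of subcases (II.1), (II.2), (II.3), and the regime $\gamma_\varepsilon/\gamma_J>1$ of (II.4) is essentially the paper's proof: the same energy identities obtained by testing \eqref{eq:ueq} with $\vu^*$ and converting the boundary term via \eqref{eq:uimp} into $-i\omega\lambda|\vg-\tilde\vg|^2$, the same sign dichotomy for $a$ and $b$ from A-3, the same non-radiating gradient sources $\vJ_\phi=(i\omega\varepsilon-\sigma)\nabla\phi$ for the non-uniqueness claims, and the same device of either dropping the nonnegative boundary term (when $b>0$) or moving the measured quantity $\omega\int_{\partial\Omega}\lambda|\vu_T|^2$ to the right-hand side (when $b<0$), followed by Young's inequality and \eqref{eq:JQE}. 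One small caveat, shared with the paper: your uniqueness step in (II.2) --- $\cl\vu=0$ and $\vu=0$ on $\{\sigma\neq 0\}$, hence $\vu\equiv 0$ --- passes from vanishing on a dense set to vanishing a.e.\ without justification, exactly as the paper's ``$b|\vu|=0$ and $\Omega\subseteq\supp b$ imply $\vu\equiv 0$'' does; neither argument is airtight for a merely piecewise smooth $\sigma$, but you are no worse off than the original.

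The genuine gap is in (II.4) with $a\geq c>0$, where you correctly diagnose that the energy identities are indefinite but then take a route that fails as stated. Your Fredholm claim rests on ``a G\aa rding inequality together with compactness results for $X$,'' but $X$ (like $\Hc$ itself) does \emph{not} embed compactly into $\Ltc$: gradient fields $\nabla\phi_n$ with $\phi_n$ bounded and oscillatory in $H^1_0(\Omega)$ are bounded in $X$ (zero curl, zero tangential trace) yet admit no $L^2$-convergent subsequence, so G\aa rding plus Rellich does not deliver a Fredholm operator of index zero here. Making your route work requires a Helmholtz-type splitting, inverting the operator on gradients and obtaining compactness only on the divergence-free complement --- which is precisely the machinery the paper imports wholesale by citing \cite[Theorem 4.17]{Monk2003}. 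In addition, both your uniqueness step and the injectivity of the trace on the kernel $N$ hinge on a unique continuation principle for Maxwell with the piecewise-$H^3$ coefficients of A-2, which you flag but leave unproven. The paper sidesteps both difficulties with a much shorter argument: since $\vu_T=(\vn\times\vu)\times\vn=\tilde\vg-\vg$ is \emph{known} from the measurements \eqref{eq:umeas}, it treats \eqref{eq:ueq} as a Dirichlet boundary value problem, lifts the trace by some $\mathcal G\in\Hc$ with $\mathcal G_T=\vdg$ and $\|\mathcal G\|_{\Hc}\leq C\|\vdg\|_{\Hmhcl}$, reduces to the homogeneous-trace problem \eqref{eq:weakDH} for $\tilde\vu=\vu-\mathcal G$, and applies \cite[Theorem 4.17]{Monk2003} (extended, per the paper's remark, to $b\leq 0$ with $b\not\equiv 0$) to get uniqueness and $\|\vu\|_{\Hc}\leq C(\|\vQ-\tilde\vQ\|_{\Ltc}+\|\vg-\tilde\vg\|_{\Ltcb})$ in one stroke, concluding via \eqref{eq:JQE}. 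You should replace your Fredholm construction with this Dirichlet reduction, or else supply both the decomposition argument and a unique continuation result valid under A-2; note also that your scheme never uses the measured Dirichlet data to formulate the forward problem, only to control $\vu_N$ a posteriori, which is why you are forced into the harder functional-analytic setting.
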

The proof is presented in the next few subsections.

\begin{remark}
    It is generally expected that $\gj\neq \geps$ because the former is solely a density effect, and the latter is due to density variation and Brillouin scattering~\cite{Bal_2010}. Thus Case $(II.4)$ is more likely to occur in practice.
\end{remark}

The results in Proposition~\ref{thm:case1} and Theorem~\ref{thm:case2} can be summarized by the following table:

\begin{table}[h]
\begin{tabular}{|c|l|c|}
\hline
Case & \hspace{2.5cm} Subcase &  Uniqueness \\
\hline
$\gamma_J=0$ & (I.1) $\Omega\subseteq \supp(\omega^2 \varepsilon \gamma_\varepsilon + i\omega \sigma \gamma_\sigma)$   &  Y  \\
 & (I.2) $\Omega\not\subseteq \supp(\omega^2 \varepsilon \gamma_\varepsilon + i\omega \sigma \gamma_\sigma)$  &  N  \\
 \hline
$\gamma_J \neq 0$ & (II.1) $\gamma_\varepsilon = \gamma_\sigma = \gamma_J$  &  N  \\
 & (II.2) $\gamma_\varepsilon = \gamma_J$, $\gamma_\sigma \neq \gamma_J$, $\Omega\subseteq\supp{\sigma}$, $\sigma >0$ &  Y  \\
  & (II.3) $\gamma_\varepsilon = \gamma_J$, $\gamma_\sigma \neq \gamma_J$, $\Omega\not\subseteq\supp{\sigma}$  &  N  \\
 & (II.4) $\gamma_\varepsilon \neq \gamma_J$ &  Y  \\
 \hline
\end{tabular}
\end{table}

\bigskip
\subsubsection{Subcase (II.1): $\gamma_\varepsilon = \gamma_\sigma = \gamma_J$.}
This subcase corresponds to $a\equiv 0$ and $b\equiv 0$. For any $\phi\in C^\infty_c(\Omega)$, if $\vE$ satisfies the equation~\eqref{eq:Eeq} and the boundary condition~\eqref{eq:imp}, so does $\vE + \nabla\phi$. This means, as a result of~\eqref{eq:intvector}, that sources of the form $J_\phi := (i\omega\varepsilon-\sigma)\nabla\phi$ are non-radiating. Thus the original source $\vJ$ cannot be uniquely determined.

\bigskip
\subsubsection{Subcase (II.2 and II.3): $\gamma_\varepsilon = \gamma_J$ and $\gamma_\sigma \neq \gamma_J$.}
This subcase corresponds to $a\equiv 0$ and $b \not\equiv 0$. Note that either $b\geq 0$ everywhere or $b \leq 0$ everywhere due to the assumption A-3. 
In the following discussion, we will keep $a$ as a placeholder, for ease of exposition. 

We now take the inner product of~\eqref{eq:ueq} with $\vu^*$ and use the vector identity $(\nabla\times \vA)\cdot \vB = \nabla\cdot (\vA\times \vB) + (\nabla\times \vB)\cdot \vA$ to integrate by parts:
\begin{align}
\nonumber 
\label{eq:ibp}
&\int_{\partial\Omega} \left[
\frac{1}{\mu}( \nabla\times \vu) \times \vu^* \right]\cdot \vn \, d {\bf x} + \int_\Omega \left[
\frac{1}{\mu} |\nabla\times \vu|^2 - (a+ib) |\vu|^2 \right] \, d {\bf x}  \\
&= \frac{1}{\gamma_J}\int_\Omega (\vQ - \tilde{\vQ})\cdot \vu^* \, d {\bf x}.
\end{align}
For the boundary integral, we apply the vector triple product identity $(\vB\times\vC)\cdot\vA 
= \vC\cdot(\vA\times\vB)$ to obtain
$$
\left[
\frac{1}{\mu} (\nabla\times \vu) \times \vu^* \right]\cdot \vn 
= \vu_T^* \cdot  \left[ \vn \times \left( \frac{1}{\mu} \nabla\times \vu \right)\right]
\quad  \text{ on } \partial\Omega.
$$
Here $ \vu_T= (\vn\times \vu)\times\vn|_{\partial\Omega},$ is the tangential trace of $\vu$ as defined in \eqref{eq:tangtrace}. 
From~\eqref{eq:uimp},
we obtain $\vn\times(\frac{1}{\mu} \nabla\times \vu) =  -i\omega\lambda \vu_T$.
Thus the boundary integrand becomes
$$
\left[
(\frac{1}{\mu} \nabla\times \vu) \times \vu^* \right]\cdot \vn = 
\vu_T^* \cdot  \left[ \vn \times \left( \frac{1}{\mu} \nabla\times \vu \right)\right] =   -i\omega\lambda |\vu_T|^2
= -i\omega\lambda |\vg - \tilde{\vg}|^2.
$$
Therefore, separating the real and imaginary parts of~\eqref{eq:ibp} we obtain
\begin{align}
\int_\Omega
\frac{1}{\mu} |\nabla\times \vu|^2 - a |\vu|^2 \, d {\bf x}
& = \int_\Omega \Re \left[ \frac{(\vQ - \tilde{\vQ})\cdot \vu^*}{\gamma_J} \right] \, d {\bf x} , \label{eq:ibpreal}\\
\int_\Omega b |\vu|^2 \, d {\bf x} + \int_{\partial\Omega} \omega\lambda |\vg - \tilde{\vg}|^2 \, d {\bf x} 
& = 
- \int_\Omega \Im \left[ \frac{(\vQ - \tilde{\vQ})\cdot \vu^*}{\gamma_J} \right] \, d {\bf x}.\label{eq:ibpimag}
\end{align}

To prove uniqueness, we set $\vg = \tilde{\vg}$. Then $\vQ = \tilde{\vQ}$ and~\eqref{eq:ibpimag} implies $b|\vu|=0$.
If $\Omega\subseteq \supp\sigma = \supp b$. 
We conclude that $\vu\equiv 0$ in $\Omega$. If $\Omega\not\subseteq \supp\sigma = \supp b$,
there exists an open set $D\subseteq \Omega\backslash \supp{b}$. For any compactly supported smooth function
$\phi\in C^\infty_c(D)$, the choice $\vu:=\nabla\phi$ is a non-trivial solution to~\eqref{eq:ueq} \eqref{eq:uimp}, proving the non-uniqueness.

Now we prove stability assuming that $\sigma$ is strictly positive, which implies that $b$ is bounded away from zero.
When $b<0$, recall that $a=0$, so there exists a constant $C>0$, independent of $\vu$, such that
\begin{align}
\|\vu\|^2_{H(curl,\Omega)}  \leq &
C \left( \int_\Omega
\frac{1}{\mu} |\nabla\times \vu|^2 - a |\vu|^2 \, d {\bf x}
+ \int_\Omega b |\vu|^2 \, d {\bf x} \right) \nonumber \\
\leq & C (\|\vQ-\tilde{\vQ}\|_{\Ltc} \|\vu\|_{\Ltc} + \|\vg-\tilde{\vg}\|^2_{\Ltcb}) \label{eq:ineq1}\\
\leq & C (\frac{\eta}{2}\|\vQ-\tilde{\vQ}\|^2_{\Ltc} + \frac{1}{2\eta} \|\vu\|^2_{\Ltc} + \|\vg-\tilde{\vg}\|^2_{\Ltcb}) \nonumber 
\end{align}
where $\eta>0$ is an arbitrary constant. If we choose $\eta$ so that $\frac{C}{2\eta}<1$, then the term $\frac{C}{2\eta} \|\vu\|_{\Ltc}$ can be absorbed into the left hand side, resulting in the following estimate (with a different constant $C$):
$$
\|\vu\|_{H(curl,\Omega)} \leq C (\|\vQ-\tilde{\vQ}\|_{\Ltc} + \|\vg-\tilde{\vg}\|_{\Ltcb}).
$$
This result, combined with~\eqref{eq:JQE}, yields the stability estimate 
$$
\|\vJ-\tilde{\vJ}\|_{\Ltc} \leq C (\|\vQ-\tilde{\vQ}\|_{\Ltc} + \|\vg-\tilde{\vg}\|_{\Ltcb}). 
$$

When $a=0$ and $b>0$ is bounded away from zero, we can obtain a better stability estimate. In this case, the left hand side of~\eqref{eq:ibpimag} is the sum of two non-negative terms, then the estimate~\eqref{eq:ineq1} can be improved as
\begin{align*}
\|\vu\|^2_{H(curl,\Omega)}  \leq &
C \left( \int_\Omega
\frac{1}{\mu} |\nabla\times \vu|^2 - a |\vu|^2 \, d {\bf x}
+ \int_\Omega b |\vu|^2 \, d {\bf x} \right) \\
\leq & C \|\vQ-\tilde{\vQ}\|_{\Ltc} \|\vu\|_{\Ltc} \\
\leq & C \|\vQ-\tilde{\vQ}\|_{\Ltc} \|\vu\|_{H(curl,\Omega)}.
\end{align*}
Canceling out $\|\vu\|^2_{H(curl,\Omega)}$ and applying the relation~\eqref{eq:JQE} yields the stability estimate 
$$
\|\vJ-\tilde{\vJ}\|_{\Ltc} \leq C \|\vQ-\tilde{\vQ}\|_{\Ltc}. 
$$

\subsubsection{Subcase (II.4): $\gamma_\varepsilon \neq  \gamma_J$.}
This subcase corresponds to $a\neq 0$. Note that due to the assumption A-3, there exists a constant $c>0$ such that either $a \geq c > 0$ everywhere or $a \leq -c < 0$ everywhere in $\Omega$.

$\bullet$ If $a\leq -c < 0$, the identities~\eqref{eq:ibp} \eqref{eq:ibpreal} \eqref{eq:ibpimag} still hold, hence there exists a constant $C>0$ such that
$$
\|\vu\|^2_{L^2(\Omega)}  \leq C  \int_\Omega
\frac{1}{\mu} |\nabla\times \vu|^2 - a |\vu|^2 \, d {\bf x}
= C \int_\Omega \Re \left[ \frac{(\vQ - \tilde{\vQ})\cdot \vu^*}{\gamma_J} \right] \, d {\bf x}.
$$
where the second equality comes from~\eqref{eq:ibpreal}.
Suppose  $\vQ = \tilde{\vQ}$, then $\vu=0$ in $\Omega$, proving uniqueness. The above inequality also implies, by the Cauchy-Schwartz inequality, that
$$
\|\vu\|^2_{\Ltc}  \leq C \|\vQ-\tilde{\vQ}\|_{\Ltc} \|\vu\|_{\Ltc}.
$$
Canceling  factors of $\|\vu\|_{L^2(\Omega)}$ and applying the relation~\eqref{eq:JQE} yields the stability estimate 
$$
\|\vJ-\tilde{\vJ}\|_{\Ltc} \leq C \|\vQ-\tilde{\vQ}\|_{\Ltc}. 
$$

$\bullet$ If $a\geq c > 0$, we consider $\vu$ equipped with the Dirichlet boundary condition 
\begin{equation}\label{eq:traceD}
\vdg := \vu_T = (\vn\times \vu)\times\vn  =\tilde{\vg}-\vg.   
\end{equation}
Since $\vE,\tilde{\vE}\in X$, we conclude that $\vdg\in\Hmhcl$. Thus, there exists a function $\mathcal G\in\Hc$, such that 
$$
\mathcal G_T =\vdg\quad \text{and} \quad \| \mathcal G\|_{\Hc} \leq C\|\vdg\|_{\Hmhcl}.
$$
Set $\tilde{\vu}:=\vu-\mathcal{G}$, then $\tilde{\vu}_T=0$ and $\tilde{\vu}$ solves
\begin{equation}
\label{eq:weakDH}
\left(\frac{1}{\mu} \cl \tilde\vu, \cl\vphi\right)_{\Ltc)}  - \left( (a+ib)\tilde\vu, \vphi\right)_{\Ltc} 
 =   \left(\tilde\vf, \vphi\right)_{\Ltc}, \ \vphi\in \Hc_0,
\end{equation}
where $\Hc_0$ is the subspace of $\Hc$ with zero tangential trace, and
$$
\tilde\vf := \frac{i\omega}{\gamma_J} (\vQ-\tilde{\vQ}) + \nabla\times (\frac{1}{\mu} \cl \mathcal G) - (a+ib)\mathcal G.
$$
Note that for $\tilde\vf\in (\Hc_0)^*$, the dual space of $\Hc_0$, we have
$$
\|\tilde\vf\|_{(\Hc_0)^*}\leq C(\|\vQ-\tilde{\vQ}\|_{\Ltc}+\|\mathcal G \|_{\Hc} \leq C(\|\vQ-\tilde{\vQ}\|_{\Ltc)}+ \|\vdg\|_{\Hmhcl}).
$$
It follows from~\cite[Theorem 4.17]{Monk2003} that there exists a unique solution $\tilde\vu$ to~\eqref{eq:weakDH} with 
\begin{align*}
\|\tilde\vu\|_{\Hc} & \leq  C(\|\tilde{\vf}\|_{(\Hc_0)^*} + \|\vdg\|_{\Hmhcl}) \\
 & \leq C(\|\vQ-\tilde{\vQ}\|_{\Ltc}+ \|\vg-\tilde{\vg}\|_{\Ltcb}).
\end{align*}

\begin{remark}
Note that when applying~\cite[Theorem 4.17]{Monk2003}, the entire boundary $\partial \Omega$ is equipped with the homogeneous Dirichlet condition. {Here we have extended~\cite[Theorem 4.17]{Monk2003}, which requires $b\geq0$, but is obviously correct for $b\leq0$ when all of $\partial\Omega$ has homogeneous Dirichlet boundary condition and $b$ is not constantly zero.}

\end{remark}

Finally, whenever $\vJ$ is uniquely determined, it can be reconstructed as follows. Given $\vQ$ and $\vg$, solve the boundary value problem~\eqref{eq:Eeq} and \eqref{eq:meas} to obtain $\vE$. Then use the Maxwell's equation~\eqref{eq:frequency} to recover $\vJ$.

\section{Numerical Experiments}

In this section, we present numerical experiments to test the reconstruction of $\vJ$ in Case (I.1) and Case (II.4). The code is implemented in Python using the finite element PDE solver \texttt{NGSolve} \footnote{The code is hosted at \href{https://github.com/lowrank/umme}{https://github.com/lowrank/umme} }. Numerical experiments are performed on the domain consisting of an infinite cylinder of radius $r= 1\text{cm}$, discretized with a uniform triangular mesh of 19276 triangles. The Maxwell equations \eqref{eq:frequency} are solved with a third-order N\'ed\'elec element.

We denote by $\varepsilon_0$ and $\mu_0$ the electric permittivity and the magnetic permeability in vacuum, respectively. In a medium with 
electric permittivity $\varepsilon$ and magnetic permeability $\mu$, we define
$$
\varepsilon_r := \frac{\varepsilon}{\varepsilon_0},\quad\quad 
\mu_r = \frac{\mu}{\mu_0}.
$$
We refer to $\varepsilon_r$ and $\mu_r$ as the relative electric permittivity and the relative magnetic permeability, respectively. Moreover,
let $c$ be the light speed in  vacuum and define
\begin{equation}
    \hat{\sigma} = \frac{1}{c\eps_0}\sigma ,\quad \hat{\bJ} = c\mu_0 \bJ,\quad \hat\omega = \frac{\omega}{c}.
\end{equation}
Using the relation $c={1}/{\sqrt{\varepsilon_0\mu_0}}$, we can rewrite the Maxwell equations~\eqref{eq:frequency} as 
$$
    \nabla\times  \frac{1}{\mu_r} \nabla\times \vE - (\hat{\omega}^2 \eps_r + i\hat\omega \hat\sigma)\vE = i \hat\omega \hat{\bJ} ,
$$
together with the impedance boundary condition~\eqref{eq:imp}, with impedance $\lambda = 1$. 

The physical parameters are chosen as follows. According to Assumption A-2, $\mu_r =1$. The frequency is selected such that $\hat \omega = \pi [\text{cm}^{-1}]$, which corresponds to a frequency $f = \frac{\omega}{2\pi} \approx  15$GHz.
Density plots of $\varepsilon_r$ and $\hat{\sigma}$ are displayed in Fig.~\ref{fig:epsilonsigma}.
For $\varepsilon_r$, the background value is taken to be 37.2 for blood  (see \cite{gabriel1996compilation}) and there are 3 regions with smaller values of $\eps_r$ which are 7.79 (top) for fat, 20.2 (left) for nerve and 36.4 (right) for muscle. The source $\hat{\vJ}$ is a real vector, whose components are shown in Fig.~\ref{fig:J}.

\bigskip

\begin{figure}[!htb] 
    \centering
    \includegraphics[width=0.48\textwidth, height=0.38\textwidth]{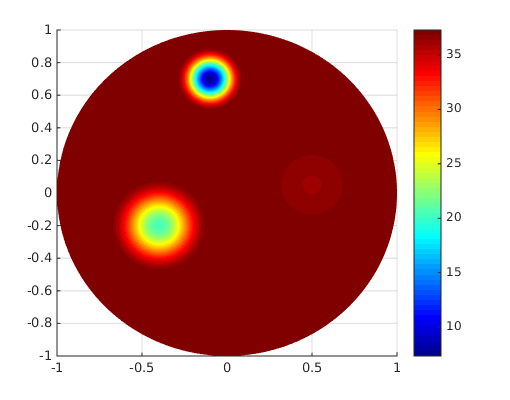}
    \includegraphics[width=0.46\textwidth, height=0.38\textwidth]{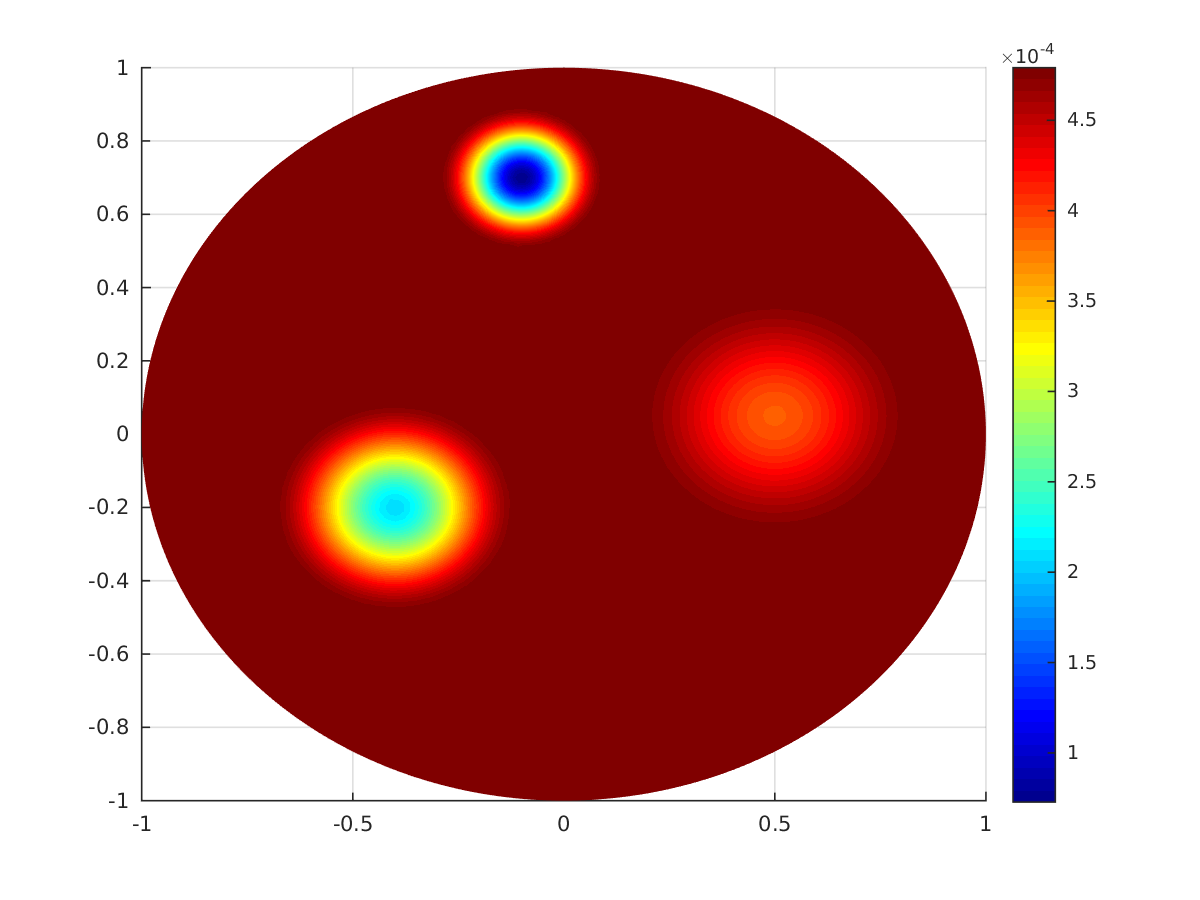}
    \caption{Left: $\varepsilon_r$, Right: $\hat \sigma [\text{cm}^{-1}]$} \label{fig:epsilonsigma}
\end{figure}

\begin{figure}[!htb]
    \centering
    \includegraphics[width=0.48\textwidth, height=0.38\textwidth]{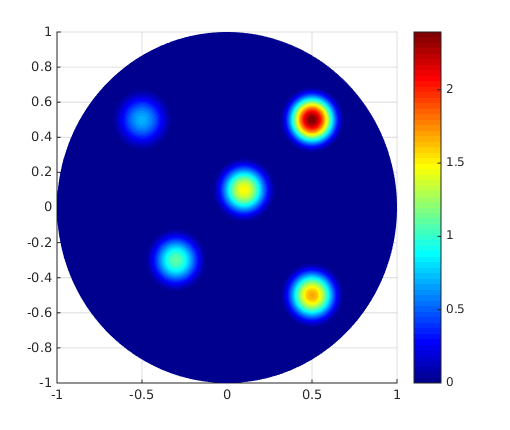}
    \includegraphics[width=0.48\textwidth, height=0.38\textwidth]{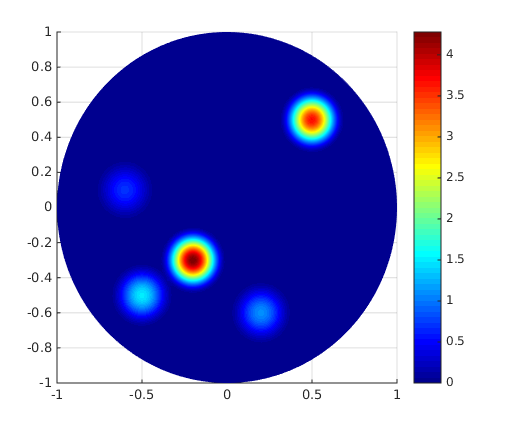}
    \caption{Source $\hat \bJ$. Left: $x$-component, Right: $y$-component. }
    \label{fig:J}
\end{figure}

Auxiliary solutions are needed in the reconstruction to compute the vector internal data~\eqref{eq:intvector} from the scalar internal data~\eqref{eq:intscalar}.
Such solutions are obtained by solving the equation for $j=1,2$:
$$
\cl \cl \vF_j^* - (\hat{\omega}^2 \varepsilon_r + i\hat{\omega}\hat{\sigma}) \vF_j^*  = 0 \quad \text{in } \quad \Omega ,
$$
along with the impedance boundary condition
$$
\left(\frac{1}{i\hat{\omega}} \nabla\times \vF_j^*\right) \times \vn   - \lambda (\vn\times\vF_j^*)\times\vn = \mathfrak{g}_j  \quad\quad \text{on } \quad\partial\Omega.
$$
where $\mathfrak{g}_k$ is defined by
\begin{equation}
    \mathfrak{g}_j := \frac{1}{i\hat{\omega}}\nabla\times \bE_j \times \bn - \lambda (\bn\times \bE_{j}) \times \bn \; \text{ on }\partial\Omega , 
\end{equation}
with $\bE_1 = (e^{-iky}, 0)$ and $\bE_2 = (0, e^{-ikx})$. Here the wave number $k=\sqrt{(\hat{\omega}^2\eps_r + i\hat{\omega}\hat{\sigma})}$, where $\eps_r, \hat{\sigma}$ are taken from the background values corresponding to blood. 
The rationale for the choice of $\mathfrak{g}_j$ is that when the medium is homogeneous, then $\bE_1$ and $\bE_2$ are mutually orthogonal plane waves. Clearly, such an orthogonality relation may not hold in practice due to the distortion caused by the inhomogeneity.
\subsection{Case (I.1)}
In this experiment, the modulation parameters are chosen as 
$\gamma_J=0$, $\gamma_{\eps} = 0.25$, $\gamma_{\sigma}=0.35$.
The scalar internal data $Q$ is obtained by solving the forward problem~\eqref{eq:frequency}.
Then $0.1\%$ multiplicative noise is added to the signal.
The vector internal data $\vQ$ is calculated from the auxiliary solutions.
The reconstruction is carried out using the procedure described in Proposition~\ref{thm:case1}. That is, we solve for $\vE$ from~\eqref{eq:intvector} and then recover $\vJ$ from the Maxwell equations~\eqref{eq:frequency} through the following weak formulation by setting $\delta=0$ in~\eqref{eq:weak}:
\begin{align}\nonumber
  &i\omega \left(\vJ, \vphi\right)_{\Ltc}\\&=\left(\frac{1}{\mu} \cl \vE, \cl\vphi \right)_{\Ltc}  - \left((\omega^2\varepsilon+ i\omega\sigma)\vE, \vphi\right)_{\Ltc} - i\omega\langle \lambda \vE_{T}, \vphi_T\rangle.  \nonumber
  \end{align}
  Here $\vJ$ is solved under the Galerkin framework by treating the left-hand side $i\omega \left(\vJ, \vphi\right)_{\Ltc}$ as the bilinear form and the right-hand side as the linear form with known $\vE$.
The reconstructed source is shown in Fig.~\ref{fig:caseI1}.

\begin{figure}[!htb]
    \centering
    \includegraphics[width=0.46\textwidth, height=0.38\textwidth]{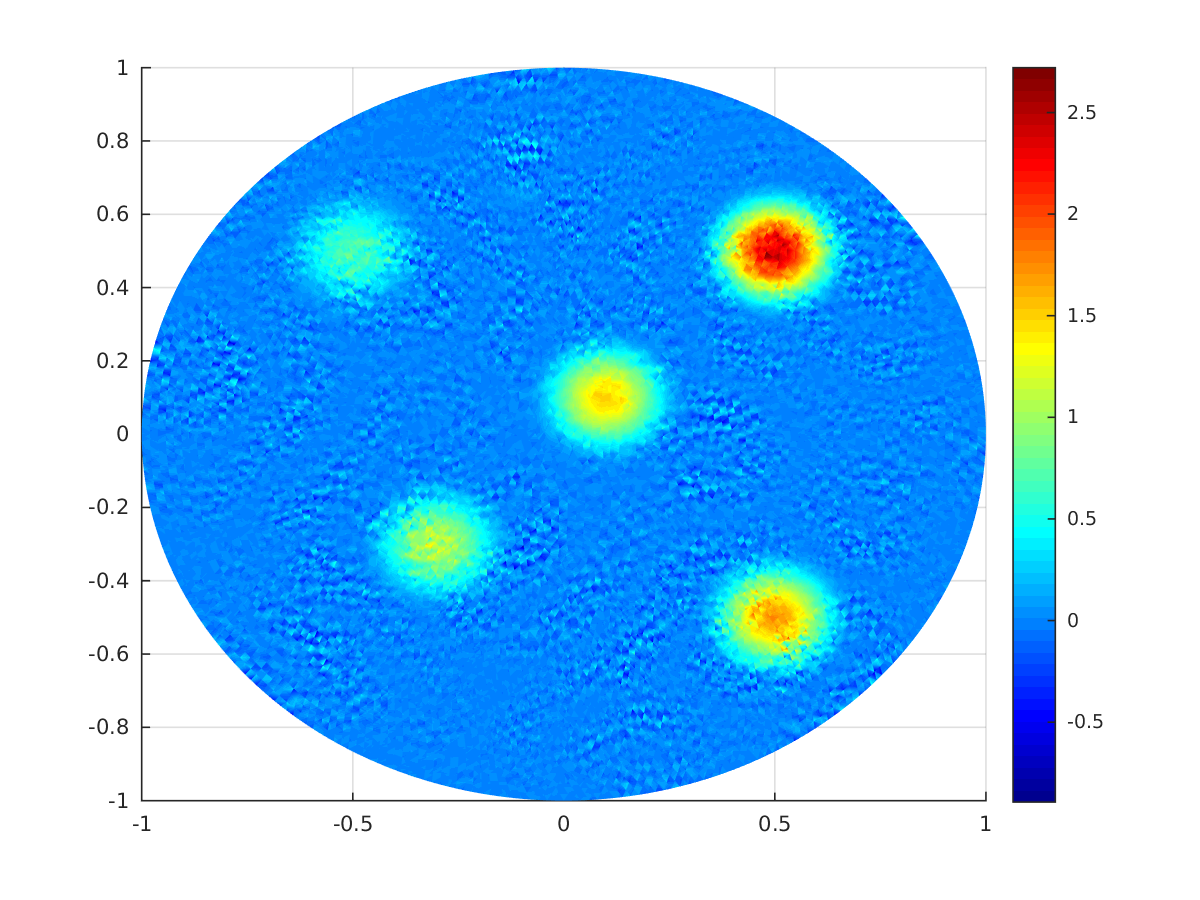}
    \includegraphics[width=0.46\textwidth, height=0.38\textwidth]{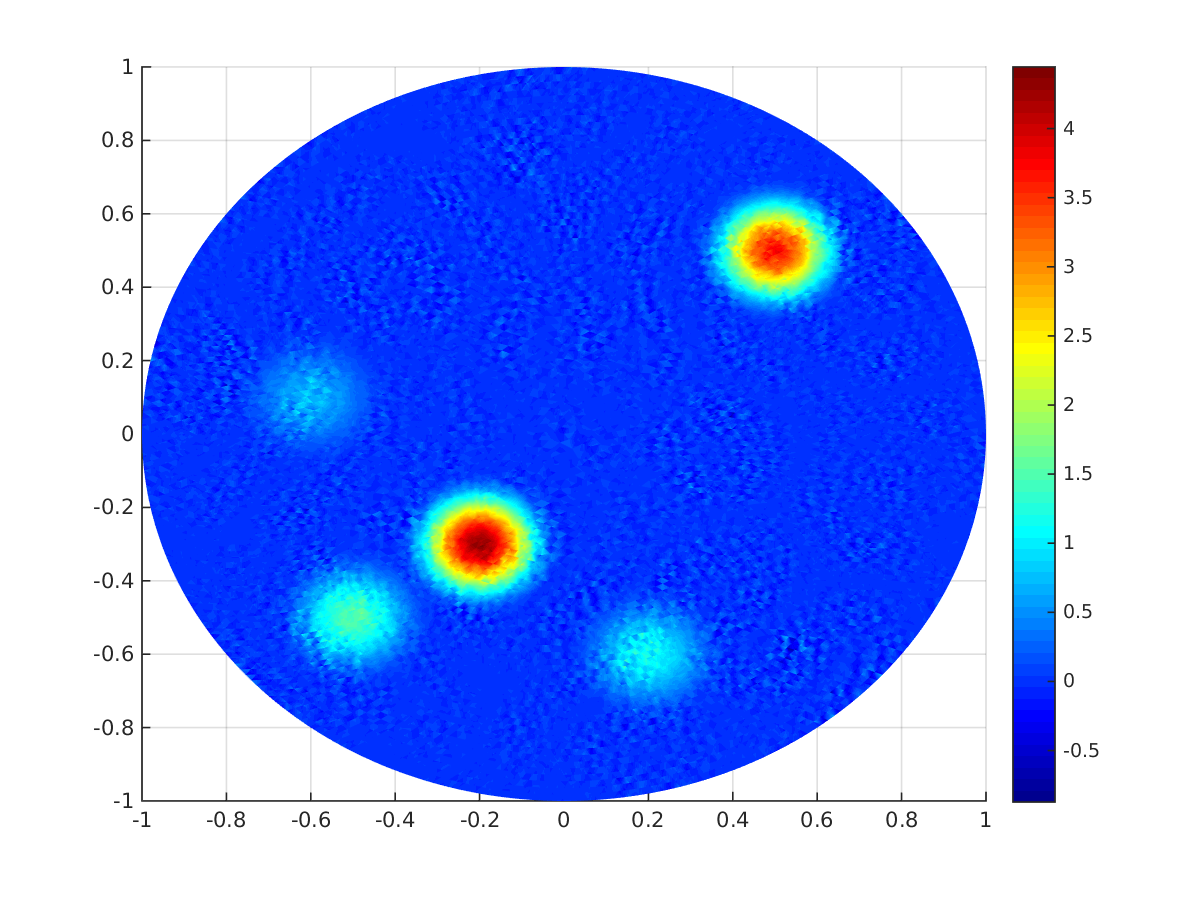}\\
    \includegraphics[width=0.46\textwidth, height=0.38\textwidth]{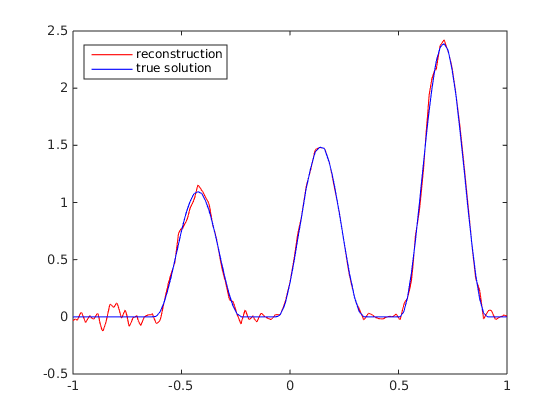}
    \includegraphics[width=0.46\textwidth, height=0.38\textwidth]{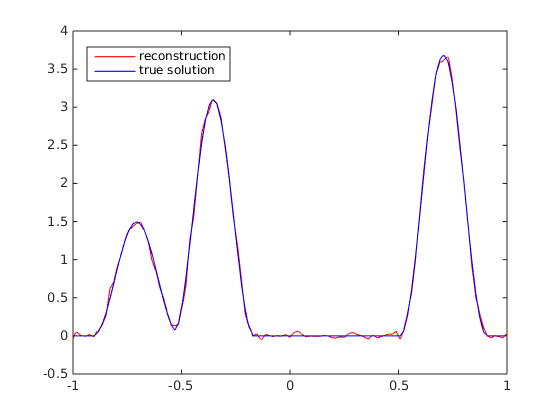}
    \caption{Reconstructed source current vector for Case (I.1). Top row: from left to right, real part of the $x$ component and the $y$ component. Bottom row: from left to right, cross sections of the reconstruction and the true solution along the line $y=x$. The relative $L^2$ error of reconstruction is $32.5\%$.}
    \label{fig:caseI1}
\end{figure}

\subsection{Case (II.4)}
In this experiment, the modulation parameters are chosen as 
$\gamma_J=0.65$, $\gamma_{\eps} = 0.35$, $\gamma_{\sigma}=0.35$.
The scalar internal data $Q$ is obtained by solving the forward problem~\eqref{eq:frequency}.
Then $1\%$ multiplicative noise is added to the signal.
The vector internal data $\vQ$ is found using the auxiliary solutions.
The reconstruction is performed according to Proposition~\ref{thm:case2}. That is, the boundary value problem~\eqref{eq:Eeq}, \eqref{eq:meas} is solved to obtain $\vE$. The Maxwell equations~\eqref{eq:frequency} are then used to find $\vJ$ through the similar approach of Case (I.1). The reconstructed source $\hat\bJ_{rec}$ is shown in Fig.~\ref{fig:caseII4}.

\begin{figure}[!htb]
    \centering
    \includegraphics[width=0.46\textwidth, height=0.38\textwidth]{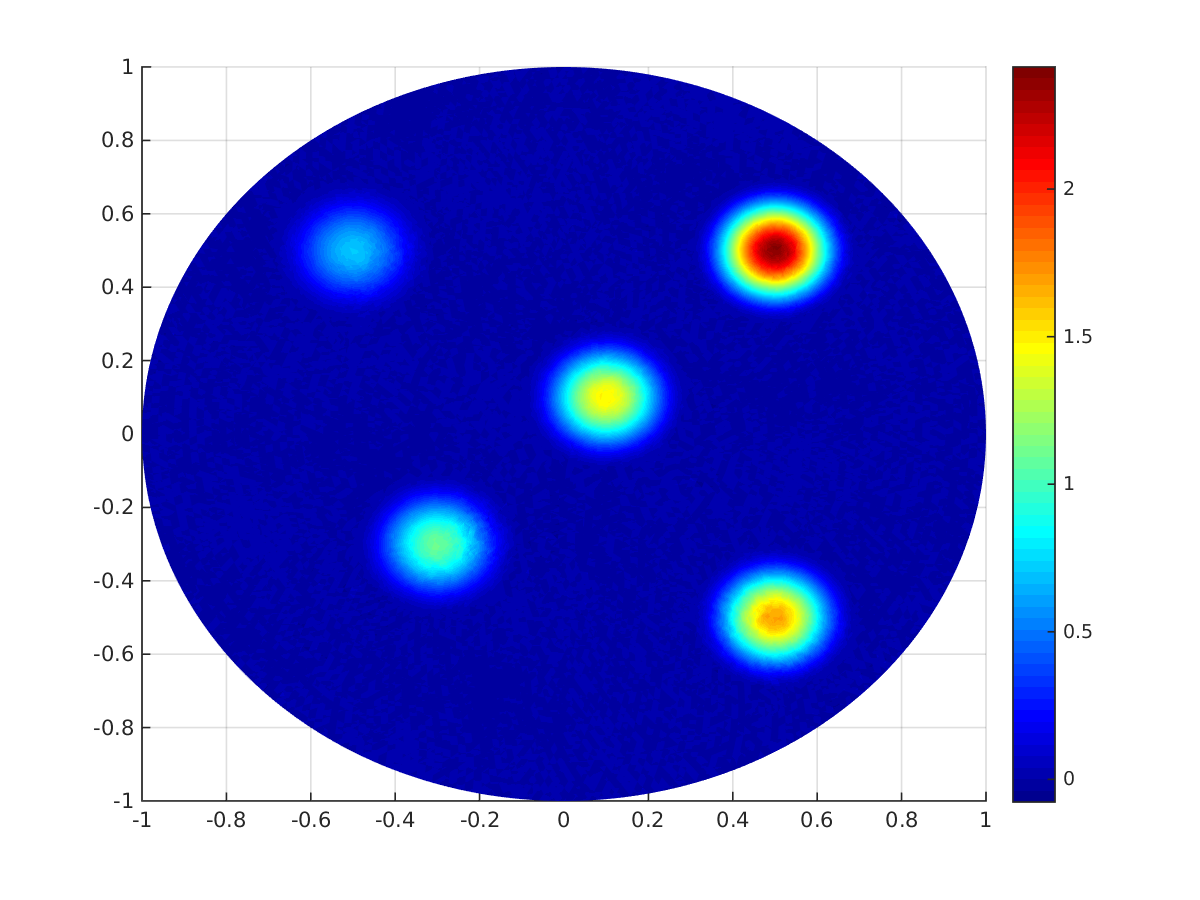}
    \includegraphics[width=0.46\textwidth, height=0.38\textwidth]{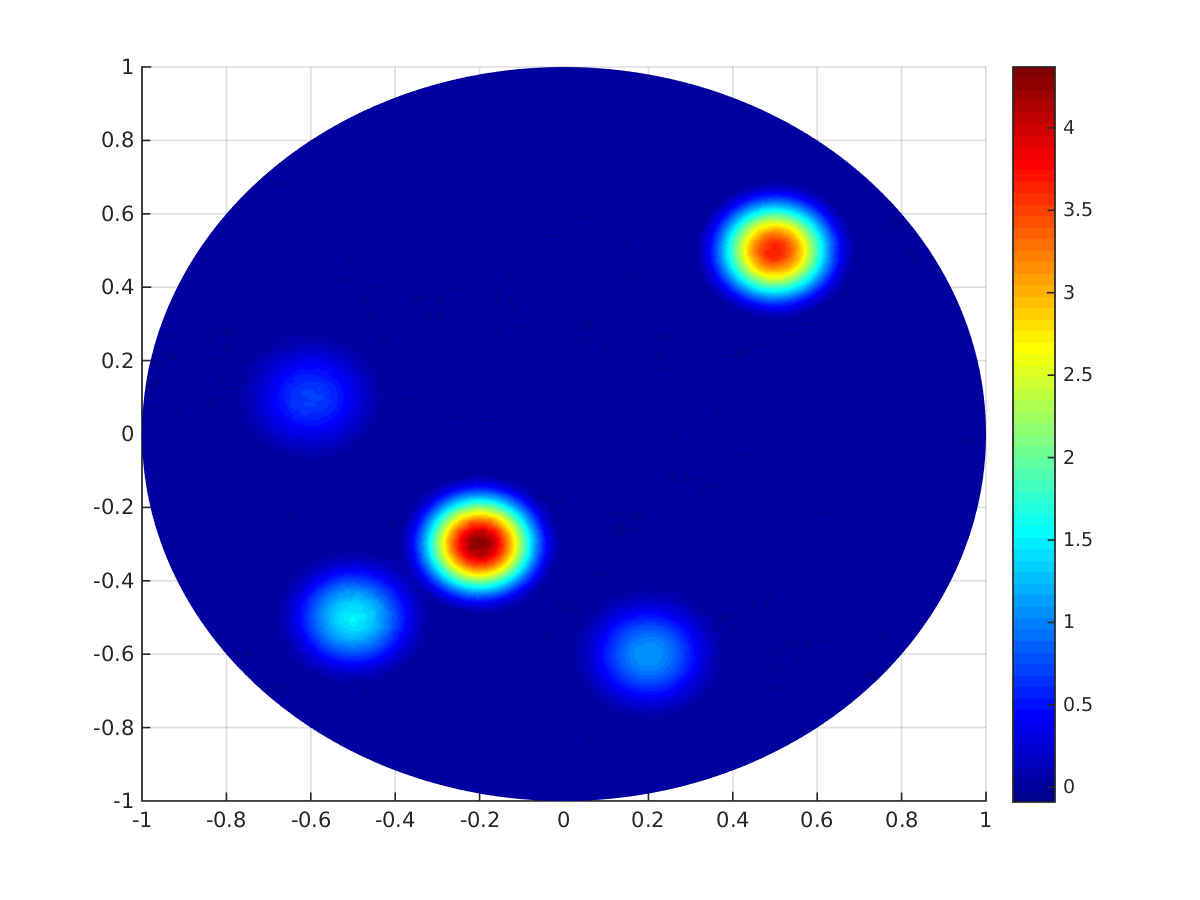}\\
    \includegraphics[width=0.46\textwidth, height=0.38\textwidth]{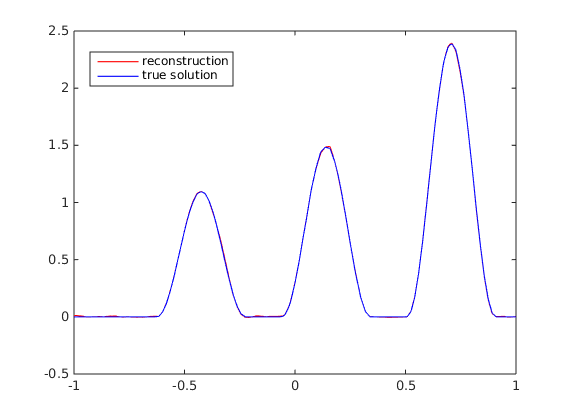}
    \includegraphics[width=0.46\textwidth, height=0.38\textwidth]{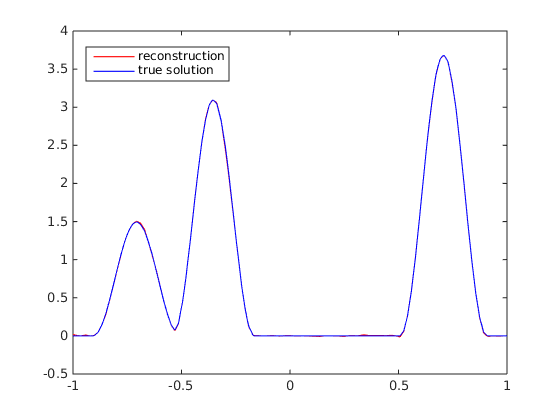}\\
    \caption{Reconstructed source current vector $\hat\bJ_{rec}$ for Case 6. Top row: from left to right, real part of $x$ component and $y$ component. Bottom row: from left to right, cross sections of reconstruction and the true solution along the line $y=x$. The relative $L^2$ error of reconstruction is $3.2\%$. }
    \label{fig:caseII4}
\end{figure}
\begin{remark}
The reconstructions for Case (II.4) are much better than those for Case (I.1). This can be explained by the corresponding stability estimates. Proposition~\ref{thm:case1} for Case (I.1) requires the $H(curl)$ norm of $\bQ$ to be bounded, which is very sensitive to noise. In contrast, the stability estimate in Proposition~\ref{thm:case2} for Case (II.4) only requires that the $L^2$ norm of $\vQ$ be bounded.
\end{remark}

\section{Acknowledgments}
The work of JCS was supported in part by the NSF grant DMS-1912821 and the AFOSR grant FA9550-19-1-0320. The work of YY is supported in part by the NSF grants DMS-1715178 and DMS-2006881.


\begin{thebibliography}{10}

\bibitem{Albanese_2006}
{\sc R.~Albanese and P.~Monk}, {\em The inverse source problem for Maxwell's
  equations}, Inverse Problems, 22 (2006), p.~1023.


\bibitem{Ammari_2008}
{\sc H.~Ammari, E.~Bonnetier, Y.~Capdeboscq, M.~Tanter, and M.~Fink}, {\em Electrical impedance tomography by elastic deformation}, SIAM J. Appl. Math, 68 (2008), p.~1557–1573.


\bibitem{Bal_2016}
{\sc G.~Bal, F.~Chung, and J.~Schotland}, {\em Ultrasound modulated bioluminescence tomography and controllability of the radiative transport equation.} SIAM J. Math. Anal., 48 (2016), pp.~1332--1347.


\bibitem{Bal2010}
{\sc G.~Bal and G.~Uhlmann}, {\em Inverse diffusion theory of photoacoustics}, Inverse Problems, 26 (2010), p.~085010.


\bibitem{BalGuoMonard2014}
{\sc G.~Bal, C.~Guo, and F.~Monard}, {\em Imaging of anisotropic conductivities from current densities in two dimensions}, SIAM J. Imag. Sci., 7 (2014), pp.~2538--2557.


\bibitem{Bal_2010}
{\sc G.~Bal and J.~Schotland}, {\em Inverse scattering and acousto-optic tomography}, Phys. Rev. Lett., 104 (2010), p.~043902.


\bibitem{Bal_2014}
\leavevmode\vrule height 2pt depth -1.6pt width 23pt, {\em Ultrasound-modulated bioluminescence tomography}, Phys. Rev. E [Rapid Communication], 89 (2014), p.~031201.


\bibitem{Bao_2020}
{\sc G.~Bao, P.~Li and Y.~Zhao}, {\em Stability for the inverse source problems in elastic and electromagnetic waves}, J. Math. Pure. Appl., 134 (2020) 122–178.


\bibitem{Bleistein_1977}
{\sc N.~Bleistein and J.~Cohen}, {\em Nonuniqueness in the inverse source problem in acoustics and electromagnetics}, J. Math. Phys., 18 (1977).


\bibitem{Capdeboscq_2009}
{\sc Y.~Capdeboscq, J.~Fehrenbach, F.~de~Gournay, and O.~Kavian}, {\em Imaging by modification: numerical reconstruction of local conductivities from corresponding power density measurements}, SIAM J. Imag. Sci., 2 (2009).


\bibitem{Cessenat1996}
{\sc M.~Cessenat}, {\em Mathematical Methods in Electromagnetism Linear Theory and Applications}, World Scientific, 1996.


\bibitem{Chung_2020_1}
{\sc F.~Chung, J.~Hoskins, and J.~Schotland}, {\em Coherent acousto-optic tomography with diffuse light}, Opt. Lett., 45 (2020), pp.~1623--1626.


\bibitem{Chung_2020_2}
{\sc F.~Chung, J.~Hoskins, and J.~Schotland}, {\em Radiative transport model for coherent acousto-optic tomography}, Inverse Problems, 36 (2020), p.~064004.


\bibitem{Chung_2017}
{\sc F.~Chung and J.~Schotland}, {\em Inverse transport and acousto-optic imaging}, SIAM J. Math. Anal., 49 (2017), pp.~4704--4721.


\bibitem{Chung2021}
{\sc F.~Chung, T.~Yang, and Y.~Yang}, {\em Ultrasound modulated bioluminescence tomography with a single optical measurement}, Inverse Problems, 37 (2021), p.~015004.


\bibitem{Devaney_1973}
{\sc A.~Devaney and E.~Wolf}, {\em Radiating and nonradiating classical current distributions and fields they generate}, Phys. Rev. D, 8 (1973), p.~1044.


\bibitem{Folland1995}
{\sc G.~B. Folland}, {\em Introduction to Partial Differential Equations},
  Princeton University Press, 2~ed., 1995.


\bibitem{gabriel1996compilation}
{\sc C.~Gabriel}, {\em Compilation of the dielectric properties of body tissues at RF and microwave frequencies.}, tech. rep., King's Coll London (United Kingdom) Dept of Physics, 1996.


\bibitem{Gebauer_2008}
{\sc A.~Gebauer and O.~Scherzer}, {\em Impedance-acoustic tomography}, SIAM J. Appl. Math, 69 (2008), p.~565.


\bibitem{Isakov_book}
{\sc V. Isakov}, {\em Inverse Source Problems}, American Mathematical Society, 1990.


\bibitem{kempe1997acousto}
{\sc M.~Kempe, M.~Larionov, D.~Zaslavsky, and A.~Genack}, {\em Acousto-optic tomography with multiply scattered light}, JOSA A, 14 (1997), pp.~1151--1158.


\bibitem{Kuchment_2011}
{\sc P.~Kuchment and L.~Kunyansky}, {\em 2d and 3d reconstructions in
  acousto-electric tomography}, Inverse Problems, 27 (2011), p.~055013.


\bibitem{kuchment2012stabilizing}
{\sc P.~Kuchment and D.~Steinhauer}, {\em Stabilizing inverse problems by internal data}, Inverse Problems, 28 (2012), p.~084007.


\bibitem{li2019hybrid}
{\sc W.~Li, Y.~Yang, and Y.~Zhong}, {\em A hybrid inverse problem in the fluorescence ultrasound modulated optical tomography in the diffusive regime}, SIAM J. Appl. Math., 79 (2019), pp.~356--376.


\bibitem{li2020inverse}
{\sc W.~Li, Y.~Yang, and Y.~Zhong}, {\em Inverse transport problem in fluorescence ultrasound modulated optical tomography with angularly averaged measurements}, Inverse Problems, 36 (2020), p.~025011.
  

\bibitem{Li_2021}
{\sc W.~Li, J.~C. Schotland, Y.~Yang, and Y.~Zhong}, {\em An Acousto-electric inverse source problem}, SIAM J. Imag. Sci., 14, 1601-1616 (2021)


\bibitem{Monk2003}
{\sc P.~Monk}, {\em Finite Element Methods for Maxwell's Equations}, Oxford University Press, 2003.


\bibitem{Nachman}
{\sc A.~Nachman, A.~Tamasan, and A.~Timonov}, {\em Conductivity imaging with a single measurement of boundary and interior data.}, Inverse Problems., 23(6), (2007), p.~2551. 


\bibitem{Parmenter_1953}
{\sc R.~Parmenter}, {\em The acousto-electric effect}, Phys. Rev, 89 (1953), p.~990.


\bibitem{Plonsey_Barr}
{\sc R. Plonsey and R. Barr}, {\em Bioelectricity: A Quantitative Approach}, Springer, 2007.


\bibitem{Schotland_2020}
{\sc J.~Schotland}, {\em Acousto-optic imaging of random media}, Prog. Opt, 65
  (2020), pp.~347--380.


\bibitem{Triki_2010}
{\sc F.~Triki}, {\em Uniqueness and stability for the inverse medium problem with internal data}, Inverse Problems, 26 (2010), p.~095014.


\bibitem{zhao2018inverse}
{\sc Y.~Zhao, G.~Hu, P.~Li, and X.~Liu}, {\em Inverse source problems in electrodynamics}, Inverse Problems and Imaging, 12 (2018), pp.~1411--1428.

\end{thebibliography}
\end{document}